\date{} 
\newtheorem{teo}{Theorem}[section]
\newtheorem{defi}[teo]{Definition}
\newtheorem{propo}[teo]{Proposition}
\newtheorem{obs}[teo]{Observation}
\newtheorem{lema}[teo]{Lemma}
\title{About an isomorphism between the Beurling algebra with a weight-dependent convolution and the $L^1(G)$ group algebra}
\author{Raúl Rodríguez-Barrera \\ Francisco Torres-Ayala}
\begin{document}
\maketitle

\abstract{We show that the Beurling algebra with a weight-dependent convolution and the group algebra $L^1(G)$ are isomorphic. In particular, using this isomorphism, we extend some results of the algebra $\mathscr{L}^1(G,\omega)$ presented in recent articles. As a main result, we explicitly construct the equivalence between unitary representations of the group and non-degenerate $\ast$-representations of this algebra. }\\

\textbf{Keywords:} Beurling Algebras, weight dependent convolution, Abstract harmonic analysis.

\section*{Introduction}
In abstract harmonic analysis, given a locally compact group $G$ with a fixed left (or right) Haar measure $\mu$,
one of the classical structures of study are the Banach spaces $L^1(G,\mathbb{B}_G,\mu)$ or simply  $L^1(G)$ (where $\mathbb{B}_G$ denotes the $\sigma$-algebra
of Borel sets in $G$), considering them  as Banach $\ast$-algebras with the product given by the usual convolution
\begin{equation*}
 f\ast g (t)=\int_G f(s)g(s^{-1}t)d\mu(s),
\end{equation*}
and the involution given by
\begin{equation*}
 f^*(t)=\Delta(t^{-1})\overline{f(s^{-1})},   
\end{equation*}
where $\Delta$ denotes the modular function of $(G,\mu)$, standard references are \cite{Deitmar},\cite{Folland}, and \cite{hewitt}.

This turns out to be a fruitful point of view because some structural aspects of $G$ are strongly related with similar ones of \(L^1(G)\). Two examples: it is a well known result that unitary representations of \(G\)  are in a one-to-one correspondence with the  
non-degenerate \(*\)-representatios of \(L^1(G)\);
amenability in the sense of groups is equivalent in the sense of Banach algebras, see \cite{gronbaek}.

Now, as Banach spaces a natural generalization of \(L^1\) spaces are the weighted  \(L^1\) spaces
(for instance they occur in regular Sturm-Liouville problems). In this frame one starts with a measurable function \(\omega: G \to (0,\infty)\)
and consider the Banach spaces \(L^1(G,\mathbb{B}_G, \omega d\mu)\)  (simply denoted  \(L^1(G,\omega)\)) as a Banach algebra  with the same
convolution and same involution as in \(L^1(G)\). This produces a new theory, the theory of Beaurling algebras (introduced in \cite{Beurling}) in which results are more delicate than
those of \(L^1(G)\), for instance, the equivalence of amenability of \(G\) and amenability of \(L^1(G,\omega)\) is no longer valid unless
more conditions on the weight function \(\omega\) are required, see \cite{gronbaek}.

In this paper we are concerned with the Banach spaces \(L^1(G,\omega)\) but with a different algebra structure. This new structure
was introduced in \cite{Mahmoodi} by A. Mahmodi via a new weight dependent convolution. The advantage of this new convolution is that similar results
for $L^1(G)$ are now also true for $L^1(G,\omega)$. Note: to prevent confusion we will be denoting $L^1(G,\omega)$ 
with the new product as $\mathscr{L}^1(G,\omega)$. The paper is divided in two parts. In the first part we introduce an isomorphism between Banach algebras that brings light to the new product defined by A. Mahmodi and simplifies the proofs for the rest of the article, also we introduce the generalization of right translation operator and we study some of the properties of the generalization of the left translation introduced in \cite{multipliers} by A.Issa, and Y. Mensa. In the second part we prove the equivalence of unitary representations of $G$ and non-degenerate $*$-representations of $\mathscr{L}^1(G,\omega)$ (which is only done by discrete groups in \cite{Mahmoodi}). 

\section{Beurling Algebra}

\begin{defi}
    Let $G$ be a locally compact topological group, a Borel measurable function $\omega: G\rightarrow (0,+\infty)$ is called a weight if it satisfies
    \begin{enumerate}
        \item $\omega(st)\leq\omega(s)\omega (t)$ for all $s,t\in G,$
        \item $\omega(e)=1$ where $e$ is the neutral element of $G$.
    \end{enumerate} 
\end{defi}
We denote by 
    \begin{equation*}
        \mathcal{B}(G):=\{\textrm{$f:G\to\mathbb{C}$ : such that $f$ is Borel measurable}\},
    \end{equation*}
 and
    \begin{equation*}
    L^1(G,\omega):=\{f\in\mathcal{B}(G)\ :\ \int_{G}^{}|f(s)|\omega(s)d\mu(s)<+\infty\}     
    \end{equation*}
where $\mu$ is a left Haar measure on $G.$ The idea of introducing these spaces is to adapt some of the techniques of harmonic analysis to a broader range of functions because the space $L^1(G)$ and $L^1(G,\omega)$ share some similarities but in some specific aspects they can be very different, depending on the weight function, even though there is an easy relation between them: $f\in L^1(G,\omega)$ if only if $f\omega\in L^1(G)$. Regarding similarities, the first result that can be consulted in \cite[Section 1.3]{Mahmoodi} is as follows.
    \begin{teo}
        With the classical convolution and the norm given by
\begin{equation*}
     \|f\|_{\omega,1}:=\int_{G}^{}|f(s)|\omega(s)d\mu(s),
\end{equation*}
the space $L^1(G,\omega)$ has a Banach algebra structure, called the Beurling algebra of G. 
    \end{teo}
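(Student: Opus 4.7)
The plan is to reduce everything to the well-known Banach algebra structure of $L^1(G)$ by exploiting the obvious isometry $f\mapsto f\omega$ and to use the submultiplicativity of $\omega$ exactly where the convolution has to be controlled.

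First I would establish that $(L^1(G,\omega),\|\cdot\|_{\omega,1})$ is a Banach space. The map $T\colon L^1(G,\omega)\to L^1(G)$ given by $T(f)=f\omega$ is well defined by the relation $f\in L^1(G,\omega)\iff f\omega\in L^1(G)$ already mentioned in the excerpt, it is linear, and $\|T(f)\|_1=\|f\|_{\omega,1}$ by definition of the weighted norm. Since $\omega$ is strictly positive and Borel measurable, $T$ is a bijection with inverse $g\mapsto g/\omega$, hence an isometric isomorphism of normed spaces. Completeness of $L^1(G)$ then transfers to $L^1(G,\omega)$.

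Next I would check that convolution maps $L^1(G,\omega)\times L^1(G,\omega)$ into $L^1(G,\omega)$ and is submultiplicative for $\|\cdot\|_{\omega,1}$. Starting from
\begin{equation*}
\|f\ast g\|_{\omega,1}\leq\int_G\int_G|f(s)|\,|g(s^{-1}t)|\,\omega(t)\,d\mu(s)\,d\mu(t),
\end{equation*}
I would write $t=s(s^{-1}t)$ and use $\omega(t)\leq\omega(s)\omega(s^{-1}t)$ to bound the integrand by $|f(s)|\omega(s)\cdot|g(s^{-1}t)|\omega(s^{-1}t)$. After a Tonelli swap (which simultaneously justifies the absolute convergence of $f\ast g(t)$ for almost every $t$), the inner integral becomes $\int_G|g(s^{-1}t)|\omega(s^{-1}t)\,d\mu(t)=\|g\|_{\omega,1}$ by left-invariance of $\mu$ under the change of variable $u=s^{-1}t$, and the outer integral collapses to $\|f\|_{\omega,1}$. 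Thus $f\ast g\in L^1(G,\omega)$ and $\|f\ast g\|_{\omega,1}\leq\|f\|_{\omega,1}\|g\|_{\omega,1}$.

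Finally, the algebraic identities (associativity of $\ast$, bilinearity, and distributivity over addition) are inherited from the classical convolution on $L^1(G)$: they hold pointwise almost everywhere by the same Fubini-type manipulations that establish them on $L^1(G)$, and the estimate above guarantees that every intermediate integral is absolutely convergent in the weighted setting. Combining the three steps gives the Banach algebra structure. The only delicate point is the submultiplicativity estimate, where the interplay between $\omega(st)\leq\omega(s)\omega(t)$ and left-invariance of $\mu$ is essential; the rest is a routine transfer of known facts from $L^1(G)$.
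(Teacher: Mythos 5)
Your proof is correct; the paper does not actually prove this theorem but simply cites it (Section 1.3 of Mahmoodi's paper), so there is no internal proof to compare against, and your argument — isometric transfer of completeness via $f\mapsto f\omega$, plus submultiplicativity of $\omega$ together with left-invariance and Tonelli for the norm estimate — is the standard one. One small point worth keeping in mind: the map $f\mapsto f\omega$ transfers only the Banach-space structure (it is \emph{not} an algebra morphism for the classical convolution; making it one is precisely the role of the weighted convolution $\ast_\omega$ later in the paper), but you correctly handle associativity and the other algebraic identities separately by Fubini-type computations whose absolute convergence is guaranteed by your estimate, so the argument is complete as written.
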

Of course, with the involution given by $f^\ast(s)=\Delta(s^{-1})\overline{f(s^{-1})}$  it is satisfied that $L^1(G,\omega)$ has $\ast$-Banach algebra structure. However, unlike in the usual case, the equivalence between the unitary representations of the group and the non-degenerate $\ast$-representations of the algebra $L^1(G,\omega )$ is not satisfied. With this in mind, A.Mahmoodi introduced in \cite{Mahmoodi} a {\it new}  weight dependent convolution  to give a {\it new} Banach algebra structure to the Banach 
space $L^1(G,\omega)$.

The work of A. Mahmoodi is divided into two sections. In the first one he proves that if $G$ is a discrete group, under certain conditions on the weight function, one has an equivalence between the representations of the group and the representations of the {\it new}  Banach algebra, see \cite[Theorem 3.1] {Mahmoodi}. In the following, we give the definition of this {\it new} weight dependent convolution.
\begin{defi}
 Let $G$ be  a locally compact group, $\mu$ be a left Haar measure on $G,$ and $\omega$ be a weight on $G.$ For elements $f$ and $g$ in $L^1(G,\omega)$ a  weight dependent convolution, denoted by $\ast_\omega$, is defined  as 
    \begin{equation*}
     (f\ast_\omega g)(t)=\int_{G}^{}{f(s)g(s^{-1}t)\frac{\omega (s)\omega (s^{-1}t)}{\omega (t)}d\mu(s)}.
    \end{equation*}
   
\end{defi}
 With this convolution, A. Mahmoodi shows that properties analogous to those for the $L^1(G)$ group algebra are satisfied as well. For example, the analog to Young's inequality for convolutions holds; see \cite[Theorem 2.1]{Mahmoodi}.
 
 \begin{propo}
     Let $G$ be a locally compact group, $\omega$ be a weight on $G$ and $f,g\in L^1(G,\omega).$ Then $f\ast_\omega g\in L^1(G,\omega)$, furthermore
     $$||f\ast_\omega g||_{\omega,1}\leq||f||_{\omega,1}||g||_{\omega,1}.$$
 \end{propo}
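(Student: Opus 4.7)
The plan is to reduce the assertion to the classical Young inequality for $L^1(G)$ by exploiting the fact that the weight factor in the definition of $\ast_\omega$ has been set up precisely so that multiplication by $\omega$ intertwines $\ast_\omega$ with the usual convolution.

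First, I would introduce the map $T\colon L^1(G,\omega)\to L^1(G)$ defined by $T(f)=f\omega$. Since $\omega>0$ is Borel measurable, $T$ is a well-defined linear bijection, and the identity
\begin{equation*}
\|T(f)\|_1 \;=\; \int_G |f(s)|\omega(s)\,d\mu(s) \;=\; \|f\|_{\omega,1}
\end{equation*}
shows that it is an isometry between the Banach spaces $L^1(G,\omega)$ and $L^1(G)$. (This is essentially the isomorphism highlighted in the introduction.)

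Next, the key computation: for $f,g\in L^1(G,\omega)$ and (almost every) $t\in G$,
\begin{equation*}
\omega(t)(f\ast_\omega g)(t) \;=\; \int_G f(s)\omega(s)\,g(s^{-1}t)\omega(s^{-1}t)\,d\mu(s) \;=\; \bigl(T(f)\ast T(g)\bigr)(t),
\end{equation*}
so $T(f\ast_\omega g)=T(f)\ast T(g)$, where on the right $\ast$ is the ordinary convolution on $L^1(G)$. The only subtlety here is justifying the manipulation at the level of integrable functions; this is handled by invoking Fubini--Tonelli on $|f(s)|\omega(s)|g(s^{-1}t)|\omega(s^{-1}t)$, whose double integral is $\|T(f)\|_1\|T(g)\|_1<\infty$ by the classical Young inequality applied to $|T(f)|,|T(g)|\in L^1(G)$. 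This simultaneously shows that the defining integral for $f\ast_\omega g$ converges absolutely for almost every $t$.

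Finally, applying the classical Young inequality in $L^1(G)$ to $T(f),T(g)$ and transporting it back through $T$ gives
\begin{equation*}
\|f\ast_\omega g\|_{\omega,1} \;=\; \|T(f\ast_\omega g)\|_1 \;=\; \|T(f)\ast T(g)\|_1 \;\le\; \|T(f)\|_1\|T(g)\|_1 \;=\; \|f\|_{\omega,1}\|g\|_{\omega,1},
\end{equation*}
which in particular shows $f\ast_\omega g\in L^1(G,\omega)$. The only real work is the Fubini step; there is no genuine obstacle since the submultiplicativity of $\omega$ is not even needed for this proposition, only the positivity and measurability that make $T$ an isometric isomorphism.
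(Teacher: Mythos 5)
Your proof is correct and is essentially the paper's own approach: your map $T(f)=f\omega$ is exactly the isometric isomorphism $\sigma$ of Lemma \ref{sigma es iso}, and the identity $\omega\,(f\ast_\omega g)=(f\omega)\ast(g\omega)$ on which your argument rests is stated verbatim in the proof of that lemma. The paper itself only cites \cite{Mahmoodi} for this proposition rather than proving it, but your reduction to Young's inequality in $L^1(G)$, with the Fubini--Tonelli justification and the correct observation that submultiplicativity of $\omega$ is not needed for this particular statement, is precisely the mechanism the paper exploits throughout.
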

 To endow the Beurling algebra with $\ast$-Banach algebra structure, further conditions on the weight function are necessary.

 \begin{defi}\label{simetrico}
      Let $G$ be a locally compact group, and $\omega$ be a weight on $G$. The weight is said to be symmetric if we have $\omega(s^{-1})=\omega(s)$ for each $s$ in $G.$
 \end{defi}
 Under the above weight condition, one has the following result, see \cite[Theorem 2.1]{Mahmoodi} and the discussion of \cite[Page 56]{Folland}.
 
 \begin{teo}
     Let $G$ be a locally compact group and $\omega$  a symmetric weight in $G.$ Then the Banach space $(L^1(G,\omega), \|\cdot\|_{\omega,1})$ has a $\ast$-Banach algebra structure with the weight dependent convolution as a product, the usual sum and the usual involution:  $f^*(s)=\overline{f(s^{-1})}\Delta(s^{-1}).$ We denote this algebra by  $\mathscr{L}^1(G,\omega)$, that is $$\mathscr{L}^1(G,\omega)=(L^1(G,\omega),\ast_\omega,+,\|\cdot\|_{\omega,1}).$$ 
 \end{teo}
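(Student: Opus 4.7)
The plan is to prove this by reducing to the known $\ast$-Banach algebra structure on $L^1(G)$ via the natural multiplication-by-$\omega$ map, which is precisely the isomorphism advertised in the introduction. Define
\[
\Phi: L^1(G,\omega) \to L^1(G), \qquad \Phi(f) = f\omega.
\]
By the defining relation $f \in L^1(G,\omega) \iff f\omega \in L^1(G)$ together with $\|f\|_{\omega,1} = \|f\omega\|_1$, the map $\Phi$ is a linear isometric bijection (its inverse being division by $\omega$, which is everywhere positive). Thus the Banach space structure transfers for free, and in particular completeness of $\mathscr{L}^1(G,\omega)$ is inherited from completeness of $L^1(G)$.

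The crucial step is to verify that $\Phi$ intertwines the products. A direct computation, using only the definition of $\ast_\omega$, gives
\[
\bigl((f \ast_\omega g)\omega\bigr)(t) = \int_G f(s)g(s^{-1}t)\,\omega(s)\,\omega(s^{-1}t)\,d\mu(s) = \int_G (f\omega)(s)\,(g\omega)(s^{-1}t)\,d\mu(s),
\]
so $\Phi(f \ast_\omega g) = \Phi(f) \ast \Phi(g)$. This identity explains the otherwise mysterious factor in the weight-dependent convolution and immediately transports associativity, bilinearity, and submultiplicativity from the classical convolution. In particular, associativity of $\ast_\omega$ follows from associativity of $\ast$ in $L^1(G)$ by applying $\Phi^{-1}$.

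For the involution, I compute on one hand
\[
\Phi(f^*)(s) = f^*(s)\omega(s) = \overline{f(s^{-1})}\,\Delta(s^{-1})\,\omega(s),
\]
and on the other hand the classical involution of $\Phi(f)$ is
\[
\bigl(\Phi(f)\bigr)^*(s) = \overline{\Phi(f)(s^{-1})}\,\Delta(s^{-1}) = \overline{f(s^{-1})}\,\omega(s^{-1})\,\Delta(s^{-1}).
\]
These two expressions coincide exactly when $\omega(s) = \omega(s^{-1})$, so under the symmetry hypothesis $\Phi$ is $\ast$-preserving. Once this is in hand, the involution axioms $(f^*)^*=f$, antilinearity, $(f\ast_\omega g)^* = g^* \ast_\omega f^*$, and the isometry $\|f^*\|_{\omega,1} = \|f\|_{\omega,1}$ all follow by applying $\Phi^{-1}$ to the corresponding identities in $L^1(G)$.

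The main conceptual point — and the only place where the hypothesis is really used — is the last one: symmetry of $\omega$ is exactly what makes $\Phi$ preserve involutions. Everything else (product, norm, completeness) works for an arbitrary weight. The rest of the proof is then routine bookkeeping, pulling the $\ast$-algebra axioms back along the isometric isomorphism $\Phi$.
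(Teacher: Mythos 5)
Your proof is correct. Strictly speaking the paper does not prove this theorem at all — it is imported from Mahmoodi's work (with a pointer to Folland) — but your argument is exactly the mechanism the paper introduces immediately afterwards in Lemma \ref{sigma es iso}: the map $\sigma(f)=f\omega$ together with the identity $(f\ast_\omega g)\,\omega=(f\omega)\ast(g\omega)$ and the involution computation that works precisely when $\omega(s^{-1})=\omega(s)$. The difference is the logical order. The paper first accepts the $\ast$-Banach algebra structure of $\mathscr{L}^1(G,\omega)$ on the authority of the cited reference (where it is checked by direct integral estimates) and only then records that $\sigma$ is an isometric $\ast$-isomorphism; you instead run the same two intertwining computations first and use them to transport associativity, submultiplicativity, completeness and the involution axioms from $L^1(G)$, which proves the theorem in a self-contained way and avoids any appearance of circularity in then calling $\sigma$ an isomorphism. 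Your version also makes the hypotheses transparent: the product and norm statements hold for an arbitrary weight, and symmetry of $\omega$ enters only to make $\Phi$ (i.e.\ $\sigma$) $\ast$-preserving, which is consistent with the paper's own remark that without extra conditions on $\omega$ one only gets a Banach algebra. The one point worth stating explicitly if you polish this is that the a.e.\ finiteness of $f\ast_\omega g$ is part of what is being transported: since $(f\omega)\ast(g\omega)$ is defined a.e.\ and lies in $L^1(G)$ by the classical theory, dividing by the strictly positive function $\omega$ gives that $f\ast_\omega g$ is defined a.e.\ and lies in $L^1(G,\omega)$ — you use this implicitly when you apply $\Phi^{-1}$.
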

 \subsection{About an Isomorphism Between Banach Algebras}
 In this section, we will focus on comparing the structure of the Banach algebras $L^1(G), L^1(G,\omega)$ and $\mathscr{L}^1(G,\omega)$. The first result we obtained is as follows.
  
\begin{lema}\label{sigma es iso}
    Let $G$ be a locally compact group, $\mu$ be a left Haar measure on $G,$ and $\omega$ be a symmetric weight in  $G.$ We define $\sigma: \mathscr{L}^1(G,\omega)\to L^1(G)$ by $\sigma(f)=f\omega,$ then $\sigma$ is an isometric $\ast$-isomorphism of Banach algebras.
\end{lema}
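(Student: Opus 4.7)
The plan is to verify each of the four properties (isometry, bijection, homomorphism, $\ast$-preservation) by direct computation; nothing deep is needed since the whole statement is essentially bookkeeping with the weight.

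First, I would observe that $\sigma$ is well-defined and isometric essentially by definition: since $\omega$ is real-valued and positive,
\begin{equation*}
\|\sigma(f)\|_1 \;=\; \int_G |f(s)\omega(s)|\, d\mu(s) \;=\; \int_G |f(s)|\omega(s)\, d\mu(s) \;=\; \|f\|_{\omega,1},
\end{equation*}
so $\sigma$ maps into $L^1(G)$ and preserves norms. Linearity is immediate because $\omega$ is a fixed function. For bijectivity, I would exhibit the explicit inverse $\tau: L^1(G)\to\mathscr{L}^1(G,\omega)$ given by $\tau(g)=g/\omega$, and note that $\tau(g)\in L^1(G,\omega)$ precisely because $\int_G |g(s)/\omega(s)|\omega(s)\,d\mu(s)=\|g\|_1<\infty$; then $\sigma\circ\tau$ and $\tau\circ\sigma$ are clearly the identity pointwise.

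The multiplicativity is the one calculation worth writing out, and it is exactly the place where the definition of $\ast_\omega$ was engineered to make this work. I would compute, for $f,g\in\mathscr{L}^1(G,\omega)$ and $t\in G$,
\begin{equation*}
\sigma(f\ast_\omega g)(t) \;=\; \omega(t)\int_G f(s)g(s^{-1}t)\,\frac{\omega(s)\omega(s^{-1}t)}{\omega(t)}\,d\mu(s) \;=\; \int_G (f\omega)(s)\,(g\omega)(s^{-1}t)\,d\mu(s),
\end{equation*}
which is precisely $(\sigma(f)\ast\sigma(g))(t)$. Hence $\sigma$ is an algebra homomorphism.

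Finally, for the $\ast$-property I would compare both sides pointwise:
\begin{equation*}
\sigma(f^{\ast})(t)=\overline{f(t^{-1})}\,\Delta(t^{-1})\,\omega(t),\qquad
\sigma(f)^{\ast}(t)=\overline{f(t^{-1})\,\omega(t^{-1})}\,\Delta(t^{-1})=\overline{f(t^{-1})}\,\omega(t^{-1})\,\Delta(t^{-1}).
\end{equation*}
These agree exactly when $\omega(t)=\omega(t^{-1})$, which is the hypothesis that $\omega$ is symmetric (Definition~\ref{simetrico}). The only subtle point in the whole argument is to notice that the symmetry assumption enters here and only here; everything else is a tautological consequence of the way $\ast_\omega$ is defined, which is exactly the conceptual payoff the authors want to highlight.
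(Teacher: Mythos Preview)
Your proof is correct and follows essentially the same approach as the paper's: both verify isometry, bijectivity (via the explicit inverse $g\mapsto g/\omega$), multiplicativity (by unwinding the definition of $\ast_\omega$), and $\ast$-preservation (using symmetry of $\omega$) by direct computation. Your write-up is slightly more detailed---for instance you expand the convolution integral explicitly---and you correctly isolate that symmetry is used only in the involution step, which the paper's proof also implicitly uses but does not highlight.
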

\begin{proof}
   Let $f,g\in\mathscr{L}^1(G,\omega)$. Directly from the definition of $*_\omega$ we have that $f*_\omega g=\omega^{-1}(f\omega)*(g\omega)$, hence:
   \begin{align*}
   \sigma(f\ast_\omega g)&=(f\ast_\omega g)\omega=\frac{f\omega\ast g\omega}{\omega}\omega=\sigma(f)\ast\sigma(g).
   \end{align*}
    On the other hand, given $h\in L^1(G)$ it follows that $\tfrac{h}{\omega}\in\mathscr{L}^1(G,\omega)$ and $\sigma (\tfrac{h}{\omega})=h$, so $\sigma$ is surjective. Since $\omega >0$ the injectivity of $\sigma$ follows. Furthermore, $\|f\|_{\omega,1}=\|f\omega\|_1=\|\sigma(f)\|_1.$
Finally,
\begin{equation*}
   \sigma(f^\ast)(s)=f^\ast(s)\omega(s)=\Delta(s^{-1})\overline{f(s^{-1})}\omega(s^{-1})=\Delta(s^{-1})\overline{\sigma(f)(s^{-1})}=\sigma(f)^\ast(s).\qedhere
\end{equation*}
\end{proof}
In \cite[Section 3]{multipliers} A.Issa, and Y. Mensah introduced the following operator.
\begin{defi}
Let $G$ be a locally compact group and $\omega$ a weight on $G$. For each $s\in G$ and $f\in \mathscr{L}^1(G,\omega)$ the operator $\Gamma_\omega^s$ is defined as follows
\begin{equation*}
 \left(\Gamma^s_\omega f\right)(t)=\frac{L_s(\omega f)}{\omega}(t)=\frac{f\left(s^{-1} t\right) \omega (s^{-1} t)}{\omega(t)}.
\end{equation*}
\end{defi}
 The above isomorphism clarifies many parts of the work developed by Y.Mensah and A.Issa, see \cite{multipliers}. For this, let us consider the following diagram
\begin{center}
\begin{tikzcd}
\mathscr{L}^{1}(G,\omega)\ar{r}{\sigma}\ar{d}[swap]{\Gamma_{\omega}^s}
& L^{1}(G)\ar{d}{L_s}\\
\mathscr{L}^{1}(G,\omega) & L^{1}(G).\ar{l}{\sigma^{-1}}
\end{tikzcd}
\end{center}  
Note that $\Gamma_{\omega}^s$ is the mapping that makes the diagram commute. We can infer that $s\to  \Gamma_{\omega}^s$ is the mapping of $G$ corresponding to the mapping $s\to L_s$.  It should be noted that in their work, they did not introduce the mapping corresponding to the right translation, which we define below.

\begin{defi}
Let $G$ be a locally compact group and $\omega$  a weight on $G$. For each $s \in G$ and $f \in \mathscr{L}^1(G, \omega)$, we define the operator  $\Theta_\omega^s$ as follows

\begin{equation*}
    (\Theta_\omega^s f)(t) = \frac{R_s(\omega f)}{\omega}(t) = \frac{f(t s) \omega(t s)}{\omega(t)}.
\end{equation*}
\end{defi}
Similarly, we have that $\Theta_\omega^s = \sigma^{-1} R_s \sigma$, where $R_s(f)(t) = f(st)$ is the right translation. Now, for $1 \leq p < +\infty$, we consider the spaces
\begin{equation*}
L^p(G,\omega):=\{ f\in\mathcal{B}(G)\ :\ \int_G |f(s)|^p\omega(s)d\mu(s)<+\infty\}   
\end{equation*} 
with the norm
\begin{equation*}
 \||f\||_{\omega,p}:=\left(\int_G |f(s)|^p\omega(s)d\mu(s)\right)^\frac{1}{p}.   
\end{equation*}
 As in the standard case, the following observation is immediate.
\begin{obs}
If we take $s=e,$ for each $f\in L^p(G,\omega)$ it follows that 
\begin{equation*}
    \left(\Gamma^e_\omega f\right)(t)=f(t)= \left(\Theta^e_\omega f\right)(t).
\end{equation*}
\end{obs}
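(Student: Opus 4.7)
The plan is to verify both equalities by direct substitution of $s=e$ into the defining formulas of $\Gamma^s_\omega$ and $\Theta^s_\omega$, and then simplify using the group axioms and the two properties of a weight.

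First I would compute $\left(\Gamma^e_\omega f\right)(t)$ from the definition. Plugging $s=e$ gives
\[
\left(\Gamma^e_\omega f\right)(t)=\frac{f(e^{-1}t)\,\omega(e^{-1}t)}{\omega(t)}.
\]
Since $e^{-1}=e$ and $et=t$, the numerator reduces to $f(t)\omega(t)$, which cancels with the denominator to yield $f(t)$. (Here one only needs that $e$ is the neutral element of $G$; no weight property beyond being defined at $t$ is invoked.)

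Second I would do the analogous substitution for $\Theta^e_\omega$: the definition gives
\[
\left(\Theta^e_\omega f\right)(t)=\frac{f(te)\,\omega(te)}{\omega(t)},
\]
and $te=t$ then collapses this to $f(t)$ in exactly the same way. Combining the two identities yields the claimed chain of equalities.

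There is no real obstacle here; the statement is an immediate sanity check that at the neutral element both generalized translation operators reduce to the identity, mirroring the classical fact that $L_e=R_e=\mathrm{Id}$ on $L^p(G)$. One may alternatively observe that the diagram preceding the observation, together with $\Theta^s_\omega=\sigma^{-1}R_s\sigma$, gives $\Gamma^e_\omega=\sigma^{-1}L_e\sigma=\sigma^{-1}\sigma=\mathrm{Id}$ and $\Theta^e_\omega=\sigma^{-1}R_e\sigma=\mathrm{Id}$, but for $f\in L^p(G,\omega)$ with $p\neq 1$ the map $\sigma$ is not the one introduced in Lemma~\ref{sigma es iso}, so the direct substitution argument above is the cleanest route.
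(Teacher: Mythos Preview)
Your proposal is correct; the paper itself gives no proof, calling the observation ``immediate,'' and your direct substitution of $s=e$ into the defining formulas is precisely the one-line verification that justifies this. Your side remark about the $\sigma$-route and its caveat for $p\neq 1$ is accurate but unnecessary for the observation as stated.
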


In \cite[Theorem 2.3.]{OnBeurling} the authors showed that if $G$ is a locally compact abelian topological group, the operator $\Gamma_\omega^s$ satisfies the following: for each $f \in L^p(G, \omega)$
\begin{equation*}
{\omega(s)}^{\frac{1-p}{p}} |||f|||_{\omega,p} \leq |||\Gamma_\omega^s f|||_{\omega,p} \leq \omega(s^{-1})^{\frac{p-1}{p}} |||f|||_{\omega,p}.
\end{equation*}
For $\Theta^s_\omega$ we obtained the following result.
\begin{propo}
Let $G$ be a locally compact abelian group, $\omega$ a weight on $G,$  $f \in L^p(G,\omega).$Then, for each $s\in G$ it holds that
\begin{equation*}
  {\omega(s^{-1})}^{\frac{1-p}{p}}|||f|||_{ \omega,p}\leq |||\Theta_\omega^s f|||_{\omega,p} \leq\omega(s)^{\frac{p-1}{p}}|||f|||_{\omega,p}.  
\end{equation*}
  
\end{propo}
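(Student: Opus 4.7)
\medskip

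\noindent\textbf{Proof plan.} The strategy is to reduce the estimate to pointwise inequalities for the weight, after first rewriting the norm as a single integral in $|f|^p$ against a ratio of weights, by means of the translation invariance of Haar measure (available because $G$ is abelian and hence unimodular).

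First I would expand the definition, obtaining
\begin{equation*}
|||\Theta_\omega^s f|||_{\omega,p}^p
=\int_G\Bigl|\frac{f(ts)\omega(ts)}{\omega(t)}\Bigr|^p\omega(t)\,d\mu(t)
=\int_G|f(ts)|^p\,\frac{\omega(ts)^p}{\omega(t)^{p-1}}\,d\mu(t).
\end{equation*}
Because $G$ is abelian, $\mu$ is also right-invariant, so the substitution $u=ts$ is legitimate and yields
\begin{equation*}
|||\Theta_\omega^s f|||_{\omega,p}^p
=\int_G|f(u)|^p\,\frac{\omega(u)^p}{\omega(us^{-1})^{p-1}}\,d\mu(u).
\end{equation*}
Thus the whole problem collapses to bounding the factor $\omega(u)^p/\omega(us^{-1})^{p-1}$ above and below by a constant multiple of $\omega(u)$, independent of $u$.

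For the upper bound I would apply submultiplicativity in the form $\omega(u)=\omega((us^{-1})s)\le \omega(us^{-1})\omega(s)$, which rearranges to $\omega(us^{-1})\ge \omega(u)/\omega(s)$. Raising to the power $p-1\ge 0$ and inverting,
\begin{equation*}
\frac{\omega(u)^p}{\omega(us^{-1})^{p-1}}\le \omega(u)^p\cdot\frac{\omega(s)^{p-1}}{\omega(u)^{p-1}}=\omega(s)^{p-1}\omega(u),
\end{equation*}
which after integration and extraction of the $p$-th root gives the stated upper bound $|||\Theta_\omega^s f|||_{\omega,p}\le \omega(s)^{(p-1)/p}|||f|||_{\omega,p}$.

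For the lower bound I would use submultiplicativity the other way: $\omega(us^{-1})\le \omega(u)\omega(s^{-1})$, so $\omega(us^{-1})^{p-1}\le \omega(u)^{p-1}\omega(s^{-1})^{p-1}$ and consequently
\begin{equation*}
\frac{\omega(u)^p}{\omega(us^{-1})^{p-1}}\ge \frac{\omega(u)}{\omega(s^{-1})^{p-1}}=\omega(s^{-1})^{-(p-1)}\omega(u).
\end{equation*}
Integrating and taking the $p$-th root produces the lower bound $\omega(s^{-1})^{(1-p)/p}|||f|||_{\omega,p}\le |||\Theta_\omega^s f|||_{\omega,p}$. I expect the only genuinely delicate point to be bookkeeping of the inversions $s\leftrightarrow s^{-1}$ and the signs of the exponents, since once the substitution $u=ts$ is in place everything reduces to applying the submultiplicativity axiom in the correct direction.
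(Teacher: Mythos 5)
Your proposal is correct and follows essentially the same route as the paper's proof: expand $|||\Theta_\omega^s f|||_{\omega,p}^p$, change variables $u=ts$ (legitimate by right-invariance of Haar measure on the abelian group $G$), and bound the resulting weight ratio pointwise above by $\omega(s)^{p-1}$ and below by $\omega(s^{-1})^{-(p-1)}$ via submultiplicativity. The only cosmetic difference is that the paper performs the substitution before the pointwise bound for the upper estimate and after it for the lower estimate, while you substitute first in both cases; the inequalities used are identical.
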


\begin{proof}
Let $f \in L^p(G,\omega),$ for each $s \in G$, we have that

\begin{align*}
||| \Theta_\omega^s f |||_{\omega}^p& =\int_G|f(ts)|^p\left(\frac{\omega\left(ts\right)}{\omega(t)}\right)^p \omega(t) d\mu(t) \\
& =\int_G|f(z)|^p\left(\frac{\omega(z)}{\omega(s^{-1} z)}\right)^{p-1} \omega(z) d \mu(z),
\end{align*}
but $\omega(z)=\omega\left(s^{-1} s z\right) \leq \omega\left(s^{-1}z\right) \omega(s )$. Then $\frac{\omega(z)}{\omega(s^{-1} z)} \leq \omega(s)$, it follows that
\begin{align*}
|||\Theta_\omega^s f |||_{ \omega}^p & \leq \int_G|f(z)|^p\left[\omega(s)\right]^{p-1} \omega(z) d \mu(z)=[\omega(s)]^{p-1}|||f|||_{ \omega}^p .
\end{align*}
In the opposite direction, we have that
\begin{align*}
|||\Theta_\omega^s f|||_{\omega}^p& =\int_G\left|f( ts)\right|^p\left(\frac{\omega\left(ts\right)}{\omega(t)}\right)^p \omega(t) d \mu(t) \\
& =\int_G\left|f(ts)\right|^p\left(\frac{\omega(ts)}{\omega(t)}\right)^{p-1} \omega(ts) d \mu(t)
\end{align*}
Given that $ \omega(ts)\omega(s^{-1}) \geq \omega(t)$. It follows that $\frac{\omega(ts)}{\omega(t)} \geq\frac{1}{\omega(s^{-1})}$. Therefore
\begin{align*}
|||\Theta_\omega^s f|||_{\omega}^p & \geq  \int_G|f(ts)|^p 
\left(\frac{1}{\omega(s^{-1})}\right)^{p-1}\omega(ts) d \mu(t) \\
& \geq\frac{1}{\omega(s^{-1})^{p-1}}|||f|||_{\omega}^p .
\end{align*}
Hence,
\begin{equation*}
 {\omega(s^{-1})}^{\frac{1-p}{p}}|||f|||_{ \omega}\leq |||\Theta_\omega^s f|||_{\omega} \leq\omega(s)^{\frac{p-1}{p}}|||f|||_{\omega}.  \qedhere 
\end{equation*}
  
\end{proof}
Next, we examine some of the properties of the previous operators.
\begin{lema}\label{gama y theta brincan }
Let $G$ be a locally compact abelian group, $\omega$ a symmetric weight on $G$, and $f, g \in \mathscr{L}^1(G, \omega)$. Then, the following identities  hold
\begin{enumerate}
\item $\Gamma_\omega^s(f \ast_\omega g)=f \ast_\omega \Gamma_\omega^s g=\Gamma_\omega^s f \ast_\omega g,$
\item $\Theta_\omega^s(f \ast_\omega g)=f \ast_\omega \Theta_\omega^s g=\Theta_\omega^s f \ast_\omega g.$
   
\end{enumerate}
\end{lema}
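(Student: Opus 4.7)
The plan is to transport both identities to $L^1(G)$ via the isometric $\ast$-isomorphism $\sigma$ of Lemma \ref{sigma es iso}. Recall from the commutative diagrams established earlier that $\Gamma_\omega^s = \sigma^{-1} L_s \sigma$ and $\Theta_\omega^s = \sigma^{-1} R_s \sigma$, and that $\sigma(f \ast_\omega g) = \sigma(f) \ast \sigma(g)$. Since $\sigma$ is a bijection, each asserted identity is equivalent to its image under $\sigma$, and these images involve only the usual left/right translations together with classical convolution on $L^1(G)$.

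I would first establish the ``generic'' halves, those that hold for every locally compact group. A standard change of variables using left-invariance of the Haar measure gives $L_s(h \ast k) = (L_s h) \ast k$ for all $h,k \in L^1(G)$, and symmetrically $R_s(h \ast k) = h \ast (R_s k)$. Transporting both back through $\sigma$ yields $\Gamma_\omega^s(f \ast_\omega g) = \Gamma_\omega^s f \ast_\omega g$ and $\Theta_\omega^s(f \ast_\omega g) = f \ast_\omega \Theta_\omega^s g$, which gives one half of each numbered item.

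For the remaining equalities I would invoke the commutativity of $G$. When $G$ is abelian, elements inside the convolution integral commute, upgrading the previous formulas to $L_s(h \ast k) = h \ast (L_s k)$ and $R_s(h \ast k) = (R_s h) \ast k$ on $L^1(G)$; one verifies these directly by changing the variable of integration and using that $s^{-1}u^{-1} = u^{-1}s^{-1}$. Pulling these two identities back by $\sigma^{-1}$ yields the missing pieces $\Gamma_\omega^s(f \ast_\omega g) = f \ast_\omega \Gamma_\omega^s g$ and $\Theta_\omega^s(f \ast_\omega g) = \Theta_\omega^s f \ast_\omega g$, completing both assertions.

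I do not expect a genuine obstacle: the whole argument reduces to classical translation/convolution formulas on $L^1(G)$ once $\sigma$ is in play. The only point requiring care is bookkeeping which of the four equalities is ``free'' on any locally compact group and which truly needs the abelian hypothesis, so that commutativity is invoked at exactly the right spot and the symmetry condition on $\omega$ is used only through Lemma \ref{sigma es iso}.
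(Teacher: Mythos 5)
Your argument is correct and is essentially the paper's: both push the identities through the isomorphism $\sigma$ of Lemma \ref{sigma es iso} and reduce everything to the classical translation--convolution identities for $L_s$ and $R_s$ in $L^1(G)$. The only difference is one of completeness: the paper cites an external result for item (1) and writes out only the equality $\Theta_\omega^s(f\ast_\omega g)=f\ast_\omega\Theta_\omega^s g$ in item (2), whereas you verify all four equalities and correctly isolate the two that genuinely require the abelian hypothesis.
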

\begin{proof} 
\begin{enumerate}
\item See \cite[Proposition 3.4]{multipliers}.
\item This statement can be proven by following the reasoning in \cite[Proposition 3.4]{multipliers}; however, the isomorphism $\sigma$ simplifies the proof. From \cite[Lemma 1.6.3]{Deitmar}, we know that $R_s h \ast k = h \ast R_s k$ for each $h, k \in L^1(G)$. It follows that
\begin{align*}
    \Theta_\omega^s(f\ast_\omega g)&=\sigma^{-1}R_s\sigma (\frac{f\omega\ast g\omega}{\omega})\\
    &=\sigma^{-1}(f\omega\ast R_s(g\omega))\\
     &=\sigma^{-1}(f\omega\ast R_s\sigma(g))\\
     &=\sigma^{-1}(f\omega\ast (\sigma^{-1}
     R_s\sigma(g))\omega)\\
     &=f\ast_\omega \Theta_\omega^sg.\qedhere  
\end{align*}

\end{enumerate}
\end{proof}

\begin{lema}\label{gamma es morfismo}
Let $G$ be a locally compact abelian group, $\omega$ a weight on $G$, and $s, r \in G$. Then
\begin{enumerate}
    \item $\Theta_\omega^s\Theta_\omega^r=\Theta_\omega^{sr},$
    \item $\Gamma_\omega^s\Gamma_\omega^r=\Gamma_\omega^{sr} .$
\end{enumerate}
\end{lema}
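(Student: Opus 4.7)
The plan is to exploit the isomorphism $\sigma$ from Lemma~\ref{sigma es iso}, which lets us transfer both identities to the corresponding composition laws for $L_s$ and $R_s$ on $L^1(G)$, where they reduce to immediate computations. Since $\Gamma_\omega^s=\sigma^{-1}L_s\sigma$ and $\Theta_\omega^s=\sigma^{-1}R_s\sigma$, the products telescope as $\Gamma_\omega^s\Gamma_\omega^r=\sigma^{-1}L_sL_r\sigma$ and $\Theta_\omega^s\Theta_\omega^r=\sigma^{-1}R_sR_r\sigma$, so it suffices to establish $L_sL_r=L_{sr}$ and $R_sR_r=R_{sr}$ on $L^1(G)$ and then conjugate by $\sigma$.

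For the left translation, a direct check on evaluation at $t\in G$ gives $(L_sL_rh)(t)=h(r^{-1}s^{-1}t)=h((sr)^{-1}t)=(L_{sr}h)(t)$, and this is valid without any hypothesis on $G$. For the right translation, with the convention $R_sh(t)=h(st)$ adopted in the excerpt, the same kind of evaluation yields $(R_sR_rh)(t)=h(rst)$; using the abelian hypothesis on $G$ built into the lemma we have $rst=srt$, and therefore $R_sR_r=R_{sr}$. Composing with $\sigma$ and $\sigma^{-1}$ finishes both identities.

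I do not foresee a serious obstacle. The only subtle point is that the paper's $R_s$ is a left action of $s$ by multiplication on the right, so $R_sR_r=R_{sr}$ would fail in the general non-abelian setting (one would instead obtain $R_sR_r=R_{rs}$); the standing commutativity assumption on $G$ absorbs precisely this discrepancy. A purely direct verification bypassing $\sigma$ is equally short: substituting the definition of $\Gamma_\omega^r$ inside $\Gamma_\omega^s$ (respectively $\Theta_\omega^r$ inside $\Theta_\omega^s$) produces a telescoping cancellation of the weights $\omega(s^{-1}t)$ (respectively $\omega(ts)$) that leaves $f((sr)^{-1}t)\,\omega((sr)^{-1}t)/\omega(t)$ (respectively $f(tsr)\,\omega(tsr)/\omega(t)$), which is exactly the right-hand side. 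I would present the $\sigma$-based argument, since it is in the spirit of the preceding lemmas and reuses the machinery already introduced.
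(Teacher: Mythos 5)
Your proof is correct and takes essentially the same route as the paper: both use the conjugation formulas $\Gamma_\omega^s=\sigma^{-1}L_s\sigma$ and $\Theta_\omega^s=\sigma^{-1}R_s\sigma$ to telescope the composition and reduce to $L_sL_r=L_{sr}$, $R_sR_r=R_{sr}$ in $L^1(G)$ (the paper only phrases this by testing against $f\ast_\omega(\cdot)$, so your direct composition is if anything slightly cleaner). Your side remark on the convention $R_sh(t)=h(st)$ versus $h(ts)$ and the role of commutativity correctly flags a genuine ambiguity in the paper's definition of $\Theta_\omega^s$, which the standing abelian hypothesis absorbs.
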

\begin{proof}
\
\begin{enumerate} 
\item Let $f,g\in\mathscr{L}^1(G,\omega)$, we have that 
    \begin{equation*}
f\ast_\omega\Theta_\omega^s\Theta_\omega^rg=f\ast_\omega(\sigma^{-1}R_s\sigma)(\sigma^{-1}R_r\sigma)g=f\ast_\omega(\sigma^{-1}R_{sr}\sigma)g=f\ast_\omega\Theta_\omega^{sr}g
\end{equation*}
This implies that $\Theta_\omega^s \Theta_\omega^r = \Theta_\omega^{sr}$.

\item The proof for the operator $\Gamma_\omega^s$ is analogous.
\qedhere
\end{enumerate}

\end{proof}

\begin{lema}\label{Gamma SOT continuo }
Let $f \in L^1(G, \omega)$ be fixed. Then, the mappings $s \mapsto \Gamma_\omega^s f$ and $s \mapsto \Theta_\omega^s f$ from $G$ to $L^1(G, \omega)$ are continuous.
\end{lema}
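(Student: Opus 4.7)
The plan is to leverage the isometric Banach-algebra isomorphism $\sigma$ of Lemma \ref{sigma es iso} to transport the problem to $L^1(G)$, where continuity of translation is classical. Recall the identifications $\Gamma_\omega^s = \sigma^{-1} L_s \sigma$ and $\Theta_\omega^s = \sigma^{-1} R_s \sigma$ exhibited earlier in this section.

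First I would fix $f \in L^1(G,\omega)$ and set $g := \sigma(f) = f\omega \in L^1(G)$. Since $\sigma$ is an isometry between $(\mathscr{L}^1(G,\omega), \|\cdot\|_{\omega,1})$ and $(L^1(G), \|\cdot\|_1)$, and since it commutes with pointwise linear operations, applying it to the differences gives
$$\|\Gamma_\omega^s f - \Gamma_\omega^{s_0} f\|_{\omega,1} = \|L_s g - L_{s_0} g\|_1, \qquad \|\Theta_\omega^s f - \Theta_\omega^{s_0} f\|_{\omega,1} = \|R_s g - R_{s_0} g\|_1.$$
The continuity of both maps at an arbitrary $s_0 \in G$ would then follow immediately from the classical fact that, for fixed $g \in L^1(G)$, the maps $s \mapsto L_s g$ and $s \mapsto R_s g$ from $G$ into $L^1(G)$ are continuous, a standard result available in \cite[Proposition 2.42]{Folland} or in the discussion preceding \cite[Lemma 1.6.3]{Deitmar}.

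There is no real obstacle here; the purpose of introducing $\sigma$ in Section 1.1 is precisely to make arguments that are routine in $L^1(G)$ apply verbatim to $\mathscr{L}^1(G,\omega)$. The only point to watch is that Lemma \ref{sigma es iso} is stated under the symmetry hypothesis on $\omega$, which is used only to preserve the involution; for the present continuity statement one relies solely on the fact that $\sigma$ is an isometric linear bijection intertwining $\Gamma_\omega^s$ with $L_s$ and $\Theta_\omega^s$ with $R_s$, and this portion of the construction goes through for an arbitrary weight $\omega$.
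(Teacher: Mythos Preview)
Your proposal is correct and follows essentially the same route as the paper: both arguments reduce the weighted-norm difference to $\|L_s(f\omega)-L_{s_0}(f\omega)\|_1$ (resp.\ with $R_s$) and then invoke the classical continuity of translation in $L^1(G)$ from \cite[Proposition~2.42]{Folland}. The only cosmetic difference is that the paper writes out the integral explicitly rather than citing the conjugation relations $\Gamma_\omega^s=\sigma^{-1}L_s\sigma$ and $\Theta_\omega^s=\sigma^{-1}R_s\sigma$; your remark that the symmetry of $\omega$ is not needed here is also accurate.
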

\begin{proof}
Note that
\begin{align*}
\|\Theta_\omega^s f - \Theta_\omega^r f\|_{\omega, 1} &= \int_G \frac{1}{\omega(t)} |R_s(f\omega)(t) - R_r(f\omega)(t)| \omega(t) \, dt \\
&= \int_G |R_s(f\omega)(t) - R_r(f\omega)(t)| \, dt \\
&= \|R_s(f\omega) - R_r(f\omega)\|_1 \rightarrow 0,
\end{align*}
where the last convergence follows from the fact that $f\omega \in L^1(G)$ and 
the continuity of the right translation in $L^1(G)$, see \cite[Proposition 2.42]{Folland}. The proof for the mapping $s \mapsto \Gamma_\omega^s f$ is analogous.
\end{proof}
\begin{defi}
Let $G$ be a locally compact abelian group, we define the Pontryagin dual of $G,$ denoted by $\widehat{G},$ as follows
    \begin{equation*}
        \widehat{G}:=\{\chi:G\to\mathbb{T}\ : \ \textrm{ $\chi$ is a continuous homomorphism}\}.   
      \end{equation*}
\end{defi}
Next, we will revisit the definition of the Fourier transform introduced in \cite[Definition 5.1]{multipliers}.
\begin{defi}
Let $G$ be a locally compact abelian group, and for a fixed $f \in \mathscr{L}^{1}(G, \omega)$, the $\omega$-Fourier transform is defined as follows
\begin{equation*}
 \widehat{f}_{\omega}(\gamma) = \int_G f(s) \overline{\gamma}(s) \omega(s) \, d\mu(s), \quad \gamma \in \widehat{G}.   
\end{equation*}
\end{defi}
It is worth noting that, this Fourier transform is simply
\begin{equation*}
\widehat{f}_\omega(\gamma)=\widehat{(f\omega)}(\gamma)=\widehat{\sigma(f)}(\gamma)    
\end{equation*}
where the transform in the second equality denotes the usual Fourier transform. Of course, analogous properties to those satisfied in the standard case hold as well; the following are some of them.
 \begin{lema}\label{fourier morfismo}
For $f, g \in \mathscr{L}^{1}(G, \omega)$, it holds that 
\begin{equation*}
 (f \ast_{\omega} g\widehat{)_{\omega}}=\widehat{f}_{\omega} \widehat{g}_{\omega} .   
\end{equation*}
\end{lema}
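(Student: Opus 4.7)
The plan is to reduce the statement to the classical convolution theorem for the usual Fourier transform on $L^1(G)$ via the isomorphism $\sigma$ introduced in Lemma \ref{sigma es iso}. The whole point of having $\sigma$ at our disposal is precisely to convert statements about $\ast_\omega$ into statements about $\ast$, and this lemma is a clean illustration.

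First I would recall the two identifications already established in the excerpt: that $\widehat{f}_\omega(\gamma) = \widehat{\sigma(f)}(\gamma)$ for every $f \in \mathscr{L}^1(G,\omega)$, and that $\sigma(f \ast_\omega g) = \sigma(f) \ast \sigma(g)$ since $\sigma$ is a Banach algebra isomorphism. Both are in hand, so no new computation is required.

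Then the proof is just a chain of equalities. For $f,g \in \mathscr{L}^1(G,\omega)$ and $\gamma \in \widehat{G}$:
\begin{equation*}
(f \ast_\omega g)\widehat{\ }_\omega(\gamma) \;=\; \widehat{\sigma(f \ast_\omega g)}(\gamma) \;=\; \widehat{\sigma(f) \ast \sigma(g)}(\gamma) \;=\; \widehat{\sigma(f)}(\gamma)\,\widehat{\sigma(g)}(\gamma) \;=\; \widehat{f}_\omega(\gamma)\,\widehat{g}_\omega(\gamma).
\end{equation*}
The third equality is the classical convolution theorem for the Fourier transform on $L^1(G)$ (for instance \cite[Proposition 4.18]{Folland}), applied to $\sigma(f), \sigma(g) \in L^1(G)$. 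The first and last equalities are the identification of $\widehat{\cdot}_\omega$ with $\widehat{\sigma(\cdot)}$, and the middle one is that $\sigma$ is multiplicative.

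There is essentially no obstacle: the only thing to be careful about is to invoke the correct hypothesis (symmetry of $\omega$) so that Lemma \ref{sigma es iso} applies and gives $\sigma$ as an algebra isomorphism; if the lemma is stated in the abelian setting without assuming symmetry explicitly, one should verify that Mahmoodi's hypotheses on $\omega$ are in force here. Beyond that bookkeeping, the argument is a one-line application of $\sigma$.
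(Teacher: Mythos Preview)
Your argument is correct and matches the paper's proof essentially line for line: the paper also writes the single chain $(f\ast_\omega g\widehat{)_\omega}=\widehat{\sigma(f\ast_\omega g)}=\widehat{\sigma(f)\ast\sigma(g)}=\widehat{\sigma(f)}\,\widehat{\sigma(g)}=\widehat{f}_\omega\,\widehat{g}_\omega$, using exactly the two ingredients you identify. Your remark about symmetry of $\omega$ is apt but already covered by the standing hypotheses under which $\sigma$ was set up.
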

\begin{proof}
Let $f,g\in\mathscr{L}^1(G,\omega),$ we have that
\begin{equation*}
 (f \ast_{\omega} g\widehat{)_{\omega}}=\sigma(f \ast_{\omega} g\widehat{)}=(\sigma(f) \ast \sigma(g)\widehat{)}=\widehat{\sigma(f)}\widehat{\sigma(g)}=\widehat{f}_{\omega} \widehat{g}_{\omega} .\qedhere   
\end{equation*}
\end{proof}
Given a Banach algebra $\mathcal{A}$, we denote by $\Delta(\mathcal{A})$ the set of multiplicative linear functionals on $\mathcal{A}$. We obtain the following result.
\begin{teo}\label{iso}
The mapping $d: \widehat{G} \to \Delta\left(\mathscr{L}^{1}(G, \omega)\right)$, given by $\gamma \mapsto d_{\gamma, \omega}$, where $d_{\gamma, \omega}(f) = \widehat{f}_\omega(\gamma)$, is a homeomorphism. 
\end{teo}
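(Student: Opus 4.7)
The plan is to reduce everything to the classical Gelfand theory of $L^1(G)$ via the isometric $\ast$-isomorphism $\sigma$ of Lemma \ref{sigma es iso}. Recall the classical fact that for a locally compact abelian group $G$, the map $\Phi:\widehat{G}\to\Delta(L^1(G))$ defined by $\Phi(\gamma)(h)=\widehat{h}(\gamma)$ is a homeomorphism, where $\Delta(L^1(G))$ carries the Gelfand (weak-$\ast$) topology. The idea is to show that $d$ factors as the composition of $\Phi$ with the homeomorphism of Gelfand spaces induced by $\sigma$.

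First I would verify that $d$ is well defined, i.e.\ that each $d_{\gamma,\omega}$ is a nonzero multiplicative linear functional on $\mathscr{L}^1(G,\omega)$. Linearity is immediate from the definition, multiplicativity is exactly Lemma \ref{fourier morfismo}, and nonvanishing follows from the identity $d_{\gamma,\omega}(f)=\widehat{\sigma(f)}(\gamma)$ together with the fact that $\sigma$ is surjective onto $L^1(G)$ and the classical Fourier transform separates points of $L^1(G)$.

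Next I would introduce the natural map $\sigma^\ast:\Delta(L^1(G))\to\Delta(\mathscr{L}^1(G,\omega))$ given by $\sigma^\ast(\phi)=\phi\circ\sigma$. Since $\sigma$ is a bijective Banach algebra isomorphism, $\sigma^\ast$ is a bijection of Gelfand spaces, and it is a homeomorphism with respect to the weak-$\ast$ topologies: if $\phi_\alpha\to\phi$ pointwise on $L^1(G)$, then for every $f\in\mathscr{L}^1(G,\omega)$ one has $\sigma^\ast(\phi_\alpha)(f)=\phi_\alpha(\sigma(f))\to\phi(\sigma(f))=\sigma^\ast(\phi)(f)$, and the same argument with $\sigma^{-1}$ gives continuity of the inverse. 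The observation recorded just before Lemma \ref{fourier morfismo}, namely $\widehat{f}_\omega(\gamma)=\widehat{\sigma(f)}(\gamma)$, is exactly the statement that
\begin{equation*}
d_{\gamma,\omega}=\Phi(\gamma)\circ\sigma=\sigma^\ast(\Phi(\gamma)),
\end{equation*}
so $d=\sigma^\ast\circ\Phi$.

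From this factorization the conclusion is immediate: $\Phi$ is a homeomorphism by the classical theorem, $\sigma^\ast$ is a homeomorphism by the previous paragraph, so $d$ is a homeomorphism as a composition of homeomorphisms. The only step that requires genuine input is the classical identification $\widehat{G}\cong\Delta(L^1(G))$, which I would cite (e.g.\ from \cite{Folland}) rather than reprove; everything else is a formal consequence of $\sigma$ being an isometric $\ast$-isomorphism. I do not expect a serious obstacle, provided the weak-$\ast$ continuity of $\sigma^\ast$ is spelled out carefully.
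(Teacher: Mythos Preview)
Your proposal is correct and follows essentially the same approach as the paper: factor $d$ as the composition of the classical homeomorphism $\widehat{G}\cong\Delta(L^1(G))$ with the homeomorphism $\Delta(L^1(G))\to\Delta(\mathscr{L}^1(G,\omega))$ induced by $\sigma$ via $\varphi\mapsto\varphi\circ\sigma$. You spell out slightly more detail (well-definedness of $d_{\gamma,\omega}$, weak-$\ast$ continuity of $\sigma^\ast$) than the paper does, but the structure of the argument is identical.
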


 \begin{proof}
We know that the mapping $\gamma \mapsto d_{\gamma}$, with $d_\gamma(f) = \widehat{f}(\gamma)$, is a homeomorphism between $\widehat{G}$ and $\Delta(L^1(G))$, see \cite[Theorem 3.2.1]{Deitmar}. Furthermore,
since \(\sigma: \mathscr{L}^{1}(G, \omega) \to L^1(G)\) is an isomorphism of 
\(*\)-Banach algebras it induces a homeomorphism \(\Sigma:\Delta(L^1(G)) \to \Delta(\mathscr{L}^{1}(G, \omega))\) given by \(\Sigma(\varphi)=\varphi \circ \sigma\). A straightforward computation shows that \(\Sigma(d_\gamma)=d_{\gamma, \omega}\)
hence proving that the map \(\gamma \to d_{\gamma, \omega}\) is a homeomorphism. 
 \end{proof}

To conclude this section, we examine some properties of the operators $\Gamma_\omega^s$ and $\Theta_\omega^s$ that will be useful in the following section.

\begin{lema}\label{gama y theta brincan adjuntos }
Let $G$ be a locally compact abelian group, $\omega$ a symmetric weight on $G,$ and $f,g\in\mathscr{L}^1(G,\omega)$, then
\begin{enumerate}
\item $g^{*} \ast_{\omega} \Gamma_{\omega}^{s} f=\left(\Gamma^{s^{-1}}_\omega g\right)^{*} \ast_\omega f,$
    \item $g^{*} \ast_{\omega} \Theta_{\omega}^{s} f=\left(\Theta^{s^{-1}}_\omega g\right)^{*} \ast_\omega f.$ 
\end{enumerate}
\end{lema}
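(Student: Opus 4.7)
The plan is to reduce both identities to standard statements in $L^1(G)$ via the $*$-isomorphism $\sigma$ from Lemma \ref{sigma es iso}. Recall that $\sigma(f\ast_\omega g)=\sigma(f)\ast\sigma(g)$, $\sigma(f^*)=\sigma(f)^*$, and, by the definitions preceding the displayed diagrams, $\sigma\circ\Gamma_\omega^s=L_s\circ\sigma$ and $\sigma\circ\Theta_\omega^s=R_s\circ\sigma$. Applying $\sigma$ to each side of identity (1) reduces it to
\[\sigma(g)^*\ast L_s\sigma(f)=\bigl(L_{s^{-1}}\sigma(g)\bigr)^*\ast\sigma(f),\]
and identity (2) becomes
\[\sigma(g)^*\ast R_s\sigma(f)=\bigl(R_{s^{-1}}\sigma(g)\bigr)^*\ast\sigma(f).\]
So once these two equalities are established in the ordinary algebra $L^1(G)$, the lemma follows by injectivity of $\sigma$.

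For the reduced identities I would use two ingredients. First, a change of variables in the left Haar integral gives $L_s h\ast k=h\ast L_s k$ in any locally compact group, and since $G$ is abelian (so $\Delta\equiv 1$ and inversion commutes with products) also $R_s h\ast k=h\ast R_s k$. Second, a short computation with the involution $h^*(t)=\overline{h(t^{-1})}$ (the modular function again trivialises) yields $(L_{s^{-1}}h)^*=L_s h^*$ and $(R_{s^{-1}}h)^*=R_s h^*$. Combining the two: $(L_{s^{-1}}h)^*\ast k=L_s h^*\ast k=h^*\ast L_s k$, and identically $(R_{s^{-1}}h)^*\ast k=R_s h^*\ast k=h^*\ast R_s k$, which are exactly the desired equalities.

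I do not anticipate any real obstacle: the main point is simply to recognise that $\sigma$ transfers the whole question to $L^1(G)$ and that abelianness eliminates both the modular factor and the asymmetry between left and right translation. The symmetry hypothesis on $\omega$ is used implicitly, since it is precisely what makes $\sigma$ intertwine the two involutions. In fact one could shorten the argument further by observing that, in the abelian setting, $\Theta_\omega^s=\Gamma_\omega^{s^{-1}}$, so (2) is a restatement of (1); I would still present both derivations in parallel to match the structure of Lemma \ref{gama y theta brincan }.
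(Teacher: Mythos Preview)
Your proposal is correct and follows essentially the same route as the paper: the paper also reduces part~(1) to the $L^1(G)$ identity $h^*\ast L_s k = (L_{s^{-1}}h)^*\ast k$ via $\sigma$ (written out concretely as multiplication and division by $\omega$, with the symmetry of $\omega$ invoked at the same spot), and then declares part~(2) analogous. Your slightly more abstract phrasing and the extra observation that $\Theta_\omega^s=\Gamma_\omega^{s^{-1}}$ in the abelian case are cosmetic differences, not a different strategy.
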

\begin{proof} \
\begin{enumerate}
    \item A straightforward calculation shows that $h^* \ast L_s k = (L_{s^{-1}} h)^* \ast k$ for each $h, k \in L^1(G)$. Hence we have that 
\begin{align*}
 g^*\ast_\omega\Gamma_\omega^s f 
 &=\frac{(g\omega)^\ast\ast L_s (f\omega)}{\omega}\\
  &=\frac{(L_{s^{-1}}(g\omega))^\ast\ast (f\omega)}{\omega}\\
  &=(\Gamma_\omega^{s^{-1}}g)^\ast\ast_\omega f.
\end{align*}
The second equality holds since $\omega$ is a symmetric weight.
\item The proof is analogous to the previous part.
\qedhere
\end{enumerate}

\end{proof}

\section{Correspondence Between Representations }
Recall that a unitary representation of a topological group $G$ is a homomorphism $\pi: G \rightarrow \mathcal{U}(H_\pi)$, where $\mathcal{U}(H_\pi)$ denotes the set of unitary operators on $H_\pi$, which is $SOT$-continuous. That is, the mappings $s \mapsto \pi(s)\xi$ are continuous with respect to the norm for each $\xi \in H_\pi$. On the other hand, a $\ast$-representation of a Banach $\ast$-algebra $\mathcal{A}$ on a Hilbert space $H_\pi$ is a $\ast$-homomorphism $\pi: \mathcal{A} \rightarrow \mathcal{B}(H_\pi)$. Moreover, we say that $\pi$ is non-degenerate if there is a non-zero vector $\xi$ in $ H_\pi$ such that $\pi(a)\xi = 0$ for every $a$  in  $\mathcal{A}$. Equivalently, $\pi(\mathcal{A})H_\pi$ is dense in $H_\pi$. Since the unitary representations of $G$ correspond to the non-degenerate $\ast$-representations of $L^1(G)$, see \cite[Proposition 6.2.1, Proposition 6.2.3]{Deitmar}, and $\sigma$ is an isometric $\ast$-isomorphism, see Lemma \ref{iso}, must be satisfied that the unitary representations of $G$ correspond to the non-degenerate $\ast$-representations of $\mathscr{L}^1(G, \omega)$. It is important to note that this holds not only in the case where $G$ is discrete, as is the main result in the first section of A. Mahmoodi's work, see \cite[Theorem 3.1]{Mahmoodi}. More generally, it is satisfied when $G$ is locally compact. On the other hand, although we know from the previous discussion that an equivalence holds between such representations, we may ask: What structure do the non-degenerate $\ast$-representations of $\mathscr{L}^1(G, \omega)$ have? Through the operators $\Theta_\omega^s$ and $\Gamma_\omega^s$, we were able to explicitly give the structure of these representations.

Let $G$ be a locally compact abelian group. In this section, if $\omega$ is a weight on $G$, we will assume it is symmetric. Given a unitary representation $(\pi, H_\pi)$ of $G$, it is associated with a $\ast$-representation of $\mathscr{L}^1(G, \omega)$, also denoted by $\pi$, as follows. Consider the mapping $A: H_\pi \times H_\pi \rightarrow \mathbb{C}$ given by: 

\begin{equation*}
      A(\xi,\eta)=\int_{G}^{}f(s)\langle \pi(s)\xi,\eta \rangle\omega(s)d\mu(s)
 \end{equation*}
for each $\xi, \eta \in H_\pi$ and $f \in \mathscr{L}^1(G, \omega)$. Note that $A$ is a bounded sesquilinear form since:
 \begin{align*}
     |A(\xi,\eta)|&=\left| \int_{G}^{}f(s)\langle \pi(s)\xi,\eta \rangle\omega(s)d\mu(s) 
 \right|\\
&\leq\|f\|_{\omega,1}\|\xi\|\|\eta\|.
 \end{align*}
Then, by the Riesz representation theorem, there exists a unique operator, denoted by $\pi(f)$, such that
\begin{equation}\label{pi(f)}
 \langle \pi(f)\xi,\eta \rangle= \int_{G}^{}f(s)\langle \pi(s)\xi,\eta \rangle\omega(s)d\mu(s) .   
\end{equation}
Moreover, from  \eqref{pi(f)}, we have that $\|\pi(f)\| \leq \|f\|_{\omega,1}$, and this operator satisfies
\begin{equation*}
\pi(f)=\int_{G}^{}f(s)\pi(s)\omega(s)d\mu(s),    
\end{equation*}
where the right-hand side of the equality is understood as a Bochner integral.

\begin{teo}
Let  $G$  be a locally compact group, $\omega$ a symmetric weight on $G$, and $\pi$ a unitary representation of $G$. Then, the mapping  $f \mapsto \pi(f)$,
where the operator $\pi(f)$ is defined as  
\begin{equation*}
\langle \pi(f)\xi, \eta \rangle = \int_{G} f(s) \langle \pi(s)\xi, \eta \rangle \omega(s) \, d\mu(s),     
\end{equation*}
 is a $\ast$-representation, non-degenerate, of the algebra $\mathscr{L}^1(G, \omega)$ on $H_\pi$. Moreover, for each $ s \in G$  and $f \in \mathscr{L}^1(G, \omega)$, the following equalities hold  
\begin{enumerate}
\item $\pi(s)\pi(f)=\pi\left(\Gamma_\omega^sf\right),$
\item $\pi(f)\pi(s)=\Delta(s^{-1})\pi(\Theta_\omega^{s^{-1}}f).$
\end{enumerate}

\end{teo}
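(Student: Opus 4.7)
The plan is to transport the classical correspondence between unitary representations of $G$ and non-degenerate $\ast$-representations of $L^1(G)$ to $\mathscr{L}^1(G,\omega)$ via the isometric $\ast$-isomorphism $\sigma$ of Lemma \ref{sigma es iso}. Let $\rho: L^1(G) \to \mathcal{B}(H_\pi)$ denote the standard integrated representation associated with $\pi$, namely the Bochner integral $\rho(h) = \int_G h(s)\pi(s)\,d\mu(s)$. Substituting $f(s)\omega(s) = \sigma(f)(s)$ in the defining formula \eqref{pi(f)} for $\pi(f)$ shows immediately that $\pi(f) = \rho(\sigma(f))$, so the assignment $f \mapsto \pi(f)$ is nothing other than the composition $\rho \circ \sigma$.

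Once this reformulation is in place, the fact that $f \mapsto \pi(f)$ is a non-degenerate $\ast$-representation of $\mathscr{L}^1(G,\omega)$ becomes automatic: $\sigma$ is an isometric $\ast$-isomorphism of Banach $\ast$-algebras onto $L^1(G)$ by Lemma \ref{sigma es iso}, and $\rho$ is a non-degenerate $\ast$-representation of $L^1(G)$ by \cite[Propositions 6.2.1 and 6.2.3]{Deitmar}; thus $\rho \circ \sigma$ is a $\ast$-homomorphism whose image equals $\rho(L^1(G))$ and so already acts non-degenerately on $H_\pi$.

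For the two translation identities I would invoke the classical intertwining relations
\begin{equation*}
\pi(s)\rho(h) = \rho(L_s h) \qquad \text{and} \qquad \rho(h)\pi(s) = \Delta(s^{-1})\rho(R_{s^{-1}}h)
\end{equation*}
valid for every $h \in L^1(G)$, the first obtained by pulling $\pi(s)$ inside the Bochner integral and the second by a left-Haar change of variables $t \mapsto ts^{-1}$. Combining these with the definitional identities $\sigma \circ \Gamma_\omega^s = L_s \circ \sigma$ and $\sigma \circ \Theta_\omega^s = R_s \circ \sigma$ (which are immediate from the definitions of $\Gamma_\omega^s$, $\Theta_\omega^s$, and $\sigma$, and require no abelian hypothesis), one gets
\begin{equation*}
\pi(s)\pi(f) = \rho(L_s \sigma(f)) = \rho(\sigma(\Gamma_\omega^s f)) = \pi(\Gamma_\omega^s f),
\end{equation*}
and analogously $\pi(f)\pi(s) = \Delta(s^{-1})\rho(R_{s^{-1}}\sigma(f)) = \Delta(s^{-1})\rho(\sigma(\Theta_\omega^{s^{-1}}f)) = \Delta(s^{-1})\pi(\Theta_\omega^{s^{-1}}f)$.

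The main obstacle is essentially cosmetic: the substantive content has been absorbed into the classical $L^1(G)$ theorem and into the fact that $\sigma$ is a $\ast$-isomorphism. The one spot that requires care is the right-Haar change of variables producing the modular factor $\Delta(s^{-1})$, and matching its exponent with the $s^{-1}$ appearing inside $\Theta_\omega^{s^{-1}}$; once this bookkeeping is lined up, the whole statement reduces to a one-line diagram chase through $\sigma$.
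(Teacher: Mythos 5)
Your proposal is correct, but it takes a genuinely different route from the paper. You factor the map as $\pi(f)=\rho(\sigma(f))$, where $\rho$ is the classical integrated representation on $L^1(G)$, and then import everything from the classical theory: the $\ast$-representation property and non-degeneracy come from \cite[Propositions 6.2.1, 6.2.3]{Deitmar} together with the fact that $\sigma$ is a surjective isometric $\ast$-isomorphism (Lemma \ref{sigma es iso}), and the two identities come from the classical intertwining relations $\pi(s)\rho(h)=\rho(L_sh)$, $\rho(h)\pi(s)=\Delta(s^{-1})\rho(R_{s^{-1}}h)$ combined with the commutation rules $\sigma\Gamma_\omega^s=L_s\sigma$ and $\sigma\Theta_\omega^{s^{-1}}=R_{s^{-1}}\sigma$; all of your intermediate identities check out, including the bookkeeping of the modular factor, and you correctly observe that no abelian hypothesis and only the symmetry of $\omega$ (through Lemma \ref{sigma es iso}) are needed. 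The paper instead proves everything by direct computation in $\mathscr{L}^1(G,\omega)$: it verifies $\pi(f^\ast)=\pi(f)^\ast$ and the two translation identities by explicit changes of variables in \eqref{pi(f)}, then derives multiplicativity $\pi(f)\pi(g)=\pi(f\ast_\omega g)$ from the $\Gamma_\omega^s$ identity via a Fubini-type computation, and establishes non-degeneracy concretely by taking $f=\mu(V)^{-1}\chi_V/\omega$ for a small compact neighborhood $V$ of $e$ so that $\pi(f)\xi\neq 0$. Your transport argument is shorter and makes the structural reason for the theorem transparent (it is essentially the abstract remark the paper itself makes before the theorem, now executed in detail); the paper's computational route is self-contained, produces the explicit formulas and the concrete witness of non-degeneracy, and rehearses the manipulations with $\Gamma_\omega^s$ and $\Theta_\omega^s$ that are reused in the converse theorem.
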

 \begin{proof}
Let $f\in \mathscr{L}^1(G, \omega)$ and $s\in G$, so
\begin{align*}
\left\langle \xi, \pi(f^*) \eta\right\rangle & =\overline{\left\langle\pi(f^*) \eta, \xi\right\rangle}\\
&=\overline{\int_Gf^*( s)\langle\pi(s) \eta, \xi\rangle \omega(s) d \mu(s)} \\
& =\int_G f(z)\langle\pi(z) \xi, \eta\rangle \omega(z) d \mu(z)\\
&=\langle\pi(f) \xi, \eta\rangle,
\end{align*}
then $\pi(f)^*=\pi(f^*).$ On the other hand, we have that
\begin{align*}
 \langle\pi(f) \pi(s) \xi, \eta\rangle&=\int_G f(r)\langle\pi(r) \pi(s) \xi, \eta\rangle \omega(r) d\mu( r)\\
 &=\Delta(s^{-1})\int_G f(z s^{-1}) \omega(z s^{-1})\left\langle\pi(z)\xi,\eta\right\rangle d\mu( z)\\
&=\Delta(s^{-1})\int_G \left(\Theta_\omega^{s^{-1}}f\right)(z)\left\langle\pi(z) \xi, \eta\right\rangle \omega(z) d\mu(z)\\
&=\left\langle \Delta(s^{-1})\pi\left(\Theta_\omega^{s^{-1}}f\right) \xi,\eta\right\rangle.
\end{align*}
Thus $\pi(f)\pi(s)=\Delta(s^{-1})\pi\left(\Theta_\omega^{s^{-1}}f\right).$ Similarly,  $\pi(s) \pi(f)=\pi\left(\Gamma_{\omega}^{s} f\right)$. Next, we have that:
\begin{align*}
\left\langle\pi(f) \pi(g) \xi,\eta\right\rangle&=\int_G f(s) \omega(s)\langle\pi(s) \pi(g) \xi, \eta\rangle d\mu( s) \\
& =\int_G f(s) \omega(s)\left\langle\pi\left(\Gamma_\omega^s g\right) \xi, \eta\right\rangle d \mu(s) \\
& =\int_G f(s) \omega(s)\left(\int_G\left(\Gamma_\omega^s g\right)(t) \omega(t)\langle\pi(t) \xi, \eta\rangle d \mu(t)\right) d\mu( s) \\
& =\int_G(f \ast_\omega g)(t) \omega(t)\langle\pi(t) \xi, \eta\rangle d\mu(t)\\
&=\langle\pi(f \ast_\omega g) \xi, \eta\rangle. \\
\end{align*}
Thus, $\pi(f)\pi(g)=\pi(f\ast_\omega g)$. Finally, let 
$\xi \in H_\pi, \xi \neq 0$ and  let $V$ be a compact neightbord of $e$ in $G$ such that $\|\pi(s) \xi-\xi\|<\|\xi\|$, for each $s\in V$, see \cite[Lemma 6.2.2]{Deitmar}. We consider
$f=(\mu(V))^{-1}\frac{\chi_V}{\omega }$, then $f\in\mathscr{L}^1(G,\omega),$ $\|f\|_{\omega,1}=1$ and, considering $\pi(f)$ as a Bochner integral, we have that
\begin{equation*}
\|\pi(f) \xi-\xi\|=\frac{1}{\mu(V)}\left\|\int_V(\pi(s) \xi-\xi) d s\right\|<\|\xi\|    
\end{equation*}
in particular  $\pi(f)\xi \neq 0,$ wich concludes the proof.
\end{proof}
In the other direction, we obtained the following result.
\begin{teo}
Let $G$ be a locally compact group, $\omega$ a symmetric weight on $G,$ $\pi: \mathscr{L}^{1}(G, \omega) \to \mathcal{B}(H _\pi)$ a non-degenerate $\ast$-representation. Then, there exists a unique unitary representation $\tilde{\pi}: G \to \mathcal{B}(H_\pi)$ such that:
\begin{equation*}
\langle\pi(f) \xi, \eta\rangle=\int_{G} f(s)\langle\tilde{\pi}(s) \xi, \eta\rangle \omega(s) d \mu(s)    
\end{equation*}
for each  $f \in \mathscr{L}^{1}(G, \omega)$ and each  $\xi,\eta \in H_{\pi}$.
\end{teo}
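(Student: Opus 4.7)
The plan is to transport the representation $\pi$ of $\mathscr{L}^1(G,\omega)$ to a representation of $L^1(G)$ using the isometric $*$-isomorphism $\sigma$ from Lemma \ref{sigma es iso}, and then invoke the classical correspondence between non-degenerate $*$-representations of $L^1(G)$ and unitary representations of $G$ (see \cite[Proposition 6.2.3]{Deitmar}).

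More precisely, I would first define $\rho : L^1(G) \to \mathcal{B}(H_\pi)$ by $\rho = \pi \circ \sigma^{-1}$. Since $\sigma^{-1}$ is an isometric $*$-isomorphism (Lemma \ref{sigma es iso}) and $\pi$ is a $*$-representation, $\rho$ is automatically a $*$-representation of $L^1(G)$. Non-degeneracy transfers: because $\sigma$ is surjective, the set $\rho(L^1(G))H_\pi = \pi(\mathscr{L}^1(G,\omega))H_\pi$ is dense in $H_\pi$. Then by the classical theorem, there exists a unique unitary representation $\tilde{\pi} : G \to \mathcal{U}(H_\pi)$ such that
\begin{equation*}
\langle \rho(h)\xi,\eta\rangle = \int_G h(s)\langle\tilde{\pi}(s)\xi,\eta\rangle\, d\mu(s)
\end{equation*}
for every $h \in L^1(G)$ and every $\xi,\eta \in H_\pi$.

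The integral formula for $\pi$ is then immediate: given $f \in \mathscr{L}^1(G,\omega)$, we have $\sigma(f) = f\omega \in L^1(G)$, hence
\begin{equation*}
\langle\pi(f)\xi,\eta\rangle = \langle\rho(\sigma(f))\xi,\eta\rangle = \int_G f(s)\omega(s)\langle\tilde{\pi}(s)\xi,\eta\rangle\, d\mu(s).
\end{equation*}
For uniqueness, if $\tilde{\pi}'$ were another such unitary representation, then the representation $f \mapsto \int_G f(s)\omega(s)\tilde{\pi}'(s)\,d\mu(s)$ of $\mathscr{L}^1(G,\omega)$ would coincide with $\pi$, which via $\sigma^{-1}$ would yield a unitary representation attached to $\rho$ in the $L^1(G)$ sense, and uniqueness in the classical case forces $\tilde{\pi}' = \tilde{\pi}$.

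I do not anticipate a genuine obstacle: the isomorphism $\sigma$ reduces the statement entirely to the classical case, so the only point requiring a bit of care is verifying that non-degeneracy transports along $\sigma$ and that the weight $\omega$ appearing in the integrand matches exactly the factor introduced by $\sigma(f) = f\omega$. Both are direct.
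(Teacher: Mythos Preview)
Your argument is correct: transporting $\pi$ along the isometric $\ast$-isomorphism $\sigma$ to a non-degenerate $\ast$-representation $\rho=\pi\circ\sigma^{-1}$ of $L^1(G)$ and then quoting the classical correspondence (\cite[Proposition~6.2.3]{Deitmar}) yields both existence and uniqueness of $\tilde\pi$, and the weight factor in the integral formula appears exactly because $\sigma(f)=f\omega$. The paper in fact acknowledges this route in the paragraph preceding the two theorems of Section~2, noting that the equivalence follows abstractly from Lemma~\ref{sigma es iso}.

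The paper then deliberately takes a different, constructive path: it builds $\tilde\pi(s)$ by hand on the dense subspace $\pi(\mathscr{L}^1(G,\omega))H_\pi$ via the formula $\tilde\pi(s)\bigl(\sum_i\pi(f_i)\xi_i\bigr)=\sum_i\pi(\Gamma_\omega^s f_i)\xi_i$, checking well-definedness through Lemma~\ref{gama y theta brincan adjuntos }, the homomorphism property through Lemma~\ref{gamma es morfismo}, and strong continuity through Lemma~\ref{Gamma SOT continuo }. What this buys is an explicit description of how the group acts in terms of the weighted translation operators $\Gamma_\omega^s$, which is the paper's stated goal (``what structure do the non-degenerate $\ast$-representations of $\mathscr{L}^1(G,\omega)$ have?''). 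Your approach is shorter and perfectly sufficient to establish the theorem as stated, but it leaves the relation $\tilde\pi(s)\pi(f)=\pi(\Gamma_\omega^s f)$ implicit rather than making it the engine of the construction.
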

\begin{proof}
Firstly, we prove the existence. We consider the subspace
\begin{equation*}
\pi\left(\mathscr{L}^{1}(G, \omega)\right)H_\pi:=\operatorname{span}\left\{\pi(f) \xi\ :\ f \in \mathscr{L}^{1}(G, \omega),\ \xi \in H_ \pi\right\}     \end{equation*}
which is dense $H_\pi,$ formed by sums of the form
\begin{equation*}
   \sum_{i=1}^{n} \pi(f_i) \xi_i \quad \text { for } f_i \in \mathscr{L}^{1}(G, \omega) \text { y } \xi_i \in H_\pi. 
\end{equation*}
We define 
\begin{equation*}
  \tilde{\pi}(s)\left(\sum_{i=1}^{n} \pi(f_{i}) \xi_i\right):=\sum_{i=1}^{n} \pi\left(\Gamma_{\omega}^{s} f_i\right) \xi_i.  
\end{equation*}
To prove that $\tilde{\pi}(s)$  is well-defined, we will show that if $\sum_{i=1}^{n} \pi(f_i) \xi_i=0$, then $\sum_{i=1}^{n} \pi\left(\Gamma_{\omega}^{s} f_i\right) \xi_i=0$ for each $s \in G.$
By the Lemma $\ref{gama y theta brincan adjuntos }$ we have that for each $s \in G$ and $f, g \in \mathscr{L}^{1}(G, \omega),$ the following holds:
\begin{equation*}
 g^{*} \ast_\omega \Gamma_{\omega}^{s} f=\left(\Gamma_{\omega}^{s-1} g\right)^{*} \ast_\omega f,   
\end{equation*}
and thus, for $\xi,\eta \in H_{\pi}$ y $f_1, \ldots, f_n \in \mathscr{L}^{1}(G, \omega) $ we have that
\begin{align*}
 \left\langle\sum_{i=1}^{n} \pi\left(\Gamma_{\omega}^{s} f_i\right)\xi, \pi(g) \eta\right\rangle
& =\sum_{i=1}^{n}\left\langle\pi\left(g^{*} \ast_\omega \Gamma_{\omega}^{s} f_i\right) \xi, \eta\right\rangle \\
& =\sum_{i=1}^{n}\left\langle\pi\left(\left(\Gamma^{ s^{-1}}_\omega g\right)^{*} \ast_\omega  f_i\right) \xi, \eta\right\rangle \\
& =\sum_{i=1}^{n}\left\langle\pi\left(\Gamma_{\omega}^{s^{-1}} g\right)^{*} \pi(f_i) \xi, \eta\right\rangle \\
& =\left\langle\sum_{i=1}^{n} \pi(f_i) \xi, \pi\left(\Gamma_\omega^{s^{-1}} g\right) \eta\right\rangle.
\end{align*}
That is,
\begin{align}\label{prod int}
 \left\langle\sum_{i=1}^{n} \pi\left(\Gamma_{\omega}^{s} f_i\right)\xi, \pi(g) \eta\right\rangle=\left\langle\sum_{i=1}^{n} \pi(f_i) \xi, \pi\left(\Gamma_\omega^{s^{-1}} g\right) \eta\right\rangle.    \end{align}
Now, to prove that  $\tilde{\pi}(s)$ is well-defined, suppose that
\begin{equation*}
 \sum_{i=1}^{n} \pi\left(f_{i}\right) \xi_i=0,
\end{equation*}
by $\eqref{prod int}$, we have that
\begin{equation*}
    0=\left\langle 0, \pi\left(\Gamma_{\omega}^{s-1} g\right) \eta\right\rangle=\left\langle\sum_{i=1}^{n} \pi\left(\Gamma_{\omega}^{s} f_i\right) \xi_i, \pi(g) \eta\right\rangle.
\end{equation*}
Thus, the vector $\sum_{i=1}^{n} \pi\left(\Gamma_{\omega}^{s} f_i\right)\xi_i$is orthogonal to all vectors of the form $\pi(g) \eta$ for each $\eta\in H_\pi$ and since  
\begin{equation*}
\overline{\operatorname{span}}\{\pi(f) \xi\ :\ f \in \mathscr{L}^{1}(G, \omega),\ \xi \in H_ \pi\}=H_\pi    
\end{equation*}
 it follows that  $\sum_{i=1}^{n} \pi\left(\Gamma_{\omega}^{s} f_i\right) \xi_i=0.$ Moreover, a straightforward application of $\eqref{prod int}$ shows that the operator $\tilde{\pi}(s)$  is unitary in the space $\pi\left(\mathscr{L}^{1}(G, \omega)\right) H_{\pi}$ and then, the operator $\tilde{\pi}(s)$ it extends to a unique unitary operator on  $H_\pi$ with inverse $\tilde{\pi}\left(s^{-1}\right)$ ; furthermore, by Lemma \(\ref{gamma es morfismo}\), it follows that

\begin{align*}
\tilde{\pi}(s r)  \left(\sum_{i=1}^{n} \pi(f_i) \xi_i\right)&=\sum_{i=1}^{n} \pi\left(\Gamma_{\omega}^{s r} f_i\right) \xi_i \\
&= \sum_{i=1}^{n} \pi\left(\Gamma_{\omega}^{s} \Gamma_{\omega}^{r} f_i\right) \xi_i\\
&=\tilde{\pi}(s)\left(\sum_{i=1}^{n} \pi\left(\Gamma_{\omega}^{r} f_i\right) \xi_i\right) \\
&=  \tilde{\pi}(s) \tilde{\pi}(r)\left(\sum_{i=1}^{n} \pi(f_i) \xi_i\right).\\
\end{align*}
Thus, $\tilde{\pi}(s r)=\tilde{\pi}(s) \tilde{\pi}(r)$ for each $s, r \in G.$ On the other hand, by Lemma $\ref{Gamma SOT continuo }$ for each $f \in \mathscr{L}^{1}(G, \omega)$ the mapping  $s\mapsto \Gamma_{\omega}^{s} f$ is continuous. It follows that $s \mapsto \tilde{\pi}(s) \xi$ is continuous for all $\xi \in H_{\pi}$, and thus $\left(\pi, H_{\pi}\right)$ is a unitary representation of $G$. Applying the previous theorem  we can induce a non-degenerate $*$-representation of $\mathscr{L}^{1}(G, \omega)$, also denoted by $\tilde{\pi}$.
To prove the equation 
\begin{equation*}
\langle\pi(f) \xi, \eta\rangle=\int_{G} f(s)\langle\tilde{\pi}(s) \xi, \eta\rangle \omega(s) d \mu(s)    
\end{equation*}
we will to show $\pi(f)=\tilde{\pi}(f)$ for each $f \in \mathscr{L}^{1}(G, \omega)$ and by continuity it is sufficient to show that
$\langle\pi(f) \pi(g) \xi, \eta\rangle=\langle\tilde{\pi}(f) \pi(g) \xi, \eta\rangle$ for each $f, g \in C_{C}(G)$ and $\xi, \eta \in H_{\pi}.$ On the other hand, as in the usual case, for more details, see \cite[Lemma B.6.5]{Deitmar} it holds that the Bochner integral $\int_G f(s)\omega(s)\Gamma_\omega^sg d\mu(s)$ exists in the Banach space $\mathscr{L}^1(G,\omega)$ and coincides with  $f\ast_\omega g.$ 
Note that
\begin{align*}
\langle\tilde{\pi}(f) \pi(g) \xi, \eta\rangle & =\int_G f(s)\langle\tilde{\pi}(s)(\pi(g) \xi), \eta\rangle \omega(s) d s \\
& =\int_G f(s)\left\langle\pi\left(\Gamma_{\omega}^{s} g\right) \xi, \eta\right\rangle \omega(s) d s \\
& =\left\langle\pi\left(\int_G f(s) \left(\Gamma_{\omega}^{s} g\right) \omega(s) d s\right) \xi, \eta\right\rangle \\
& =\langle\pi(f \ast_\omega g) \xi, \eta\rangle \\
& =\langle\pi(f) \pi(g) \xi, \eta\rangle
\end{align*}
it follows that $\tilde{\pi}(f) \pi(g)=\pi(f) \pi(g)$ and thus $\tilde{\pi}(f)=\pi(f)$. To complete the proof, suppose that  $\tilde{\rho}$ is another unitary representation of $G$  such that for each $f\in\mathscr{L}^1(G,\omega)$ we have
\begin{align*} 
\langle \pi(f)\xi, \eta \rangle = 
\int_G f(s) \langle \tilde{\rho}(s)\xi, \eta \rangle d\mu(s).
\end{align*}
Then $\tilde{\rho}$ induces a $*$-representation 
of $\mathscr{L}^1(G,\omega)$, also denoted by $\tilde{\rho}$  and we have $\tilde{\rho}=\pi$ 
on $\mathscr{L}^1(G,\omega)$. By   $\eqref{pi(f)}$ it follows that $\langle\tilde{\pi}(s)\xi,\eta\rangle=\langle\tilde{\rho}(s)\xi,\eta\rangle$ for each $s\in G$ y $\xi,\eta\in H_\pi$. Therefore, $\tilde{\pi}(s)=\tilde{\rho}(s)$ for all $s\in  G.$\qedhere
\end{proof}
As mentioned earlier, the mapping $s\to \Gamma_\omega^s$  corresponds to the left translation; however, $\Gamma_\omega^s: L^2(G, \omega) \to L^2(G, \omega)$ is not a unitary operator. To address this issue, we consider the spaces for $1 \leq p < +\infty$.
\begin{equation*}
\mathcal{L}^p(G,\omega):=\{ f\in\mathcal{B}(G)\ :\ \int_G |f(s)|^p\omega(s)^pd\mu(s)<+\infty\}   
\end{equation*} with the norm
\begin{equation*}
 \|f\|_{\omega,p}:=\left(\int_G |f(s)|^p\omega(s)^pd\mu(s)\right)^\frac{1}{p}.   
\end{equation*}
In particular, for $p=2$ with the inner product $\langle f,g\rangle:=\int_G f(s)\overline{g(s)}\omega^2(s)d\mu(s)$ $\mathcal{L}^2(G,\omega)$ is a Hilbert space. Considering the operator $\Gamma_\omega^s$ over $\mathcal{L}^2(G,\omega)$ we obtained the following result analogous to the Lemma $\ref{Gamma SOT continuo }$
\begin{lema}\label{Gamma SOT continuo 2 }
Let $f\in \mathcal{L}^p(G,\omega)$ be fixed. Then, the mapping $s\mapsto \Gamma_\omega^s f$  from $G$ to $\mathcal{L}^p(G,\omega)$ is continuous.
\end{lema}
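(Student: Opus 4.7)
The plan is to mimic the argument of Lemma \ref{Gamma SOT continuo } but in the $p$-weighted setting, reducing the statement to the standard continuity of left translation on $L^p(G)$ via the natural isometric identification $\mathcal{L}^p(G,\omega)\to L^p(G)$ given by $f\mapsto f\omega$. The point is that although the continuity of $s\mapsto\Gamma_\omega^s f$ was proved in Lemma \ref{Gamma SOT continuo } only for $f\in L^1(G,\omega)$, the exponent $p$ on the weight $\omega$ in the definition of $\mathcal{L}^p(G,\omega)$ is precisely what is needed to make the same trick work.

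First I would note that $f\in\mathcal{L}^p(G,\omega)$ if and only if $f\omega\in L^p(G)$, and that the map $\tau(f)=f\omega$ is an isometry between these two spaces. Next, directly from the definition of $\Gamma_\omega^s$ one has the key pointwise identity
\begin{equation*}
(\Gamma_\omega^s f)(t)\,\omega(t)=f(s^{-1}t)\,\omega(s^{-1}t)=L_s(f\omega)(t),
\end{equation*}
i.e.\ $\tau\circ\Gamma_\omega^s=L_s\circ\tau$ (which is exactly the commutative diagram appearing before Lemma \ref{gama y theta brincan }, now read on $\mathcal{L}^p$ instead of $\mathscr{L}^1$).

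From this identity, for $s,r\in G$ I would compute
\begin{equation*}
\|\Gamma_\omega^s f-\Gamma_\omega^r f\|_{\omega,p}^{\,p}=\int_G\bigl|(\Gamma_\omega^s f)(t)-(\Gamma_\omega^r f)(t)\bigr|^p\omega(t)^p\,d\mu(t)=\int_G\bigl|L_s(f\omega)(t)-L_r(f\omega)(t)\bigr|^p d\mu(t),
\end{equation*}
so that
\begin{equation*}
\|\Gamma_\omega^s f-\Gamma_\omega^r f\|_{\omega,p}=\|L_s(f\omega)-L_r(f\omega)\|_{p}.
\end{equation*}
Since $f\omega\in L^p(G)$ and left translation is strongly continuous on $L^p(G)$ for $1\le p<\infty$ (see \cite[Proposition 2.42]{Folland}), the right hand side tends to $0$ as $s\to r$, concluding the proof.

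There is no real obstacle here: the argument is routine once one observes that the $p$-th power of $\omega$ in the norm of $\mathcal{L}^p(G,\omega)$ is tailored to cancel the factor $\omega(t)^{-p}$ arising from the definition of $\Gamma_\omega^s$, reducing everything to the classical fact about left translation. Note that if one worked instead with $L^p(G,\omega)$ (weight $\omega$ rather than $\omega^p$ in the integral) this clean reduction would fail, which is why the statement is formulated in $\mathcal{L}^p(G,\omega)$.
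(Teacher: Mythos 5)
Your proof is correct and follows essentially the same route as the paper: the $\omega^p$ in the $\mathcal{L}^p(G,\omega)$ norm cancels the $\omega(t)^{-p}$ coming from the operator, reducing everything to the strong continuity of translation on $L^p(G)$ via \cite[Proposition 2.42]{Folland}. If anything, your version is slightly cleaner, since you carry out the computation directly for $\Gamma_\omega^s$ (which is what the statement asserts), whereas the paper's displayed computation is written for $\Theta_\omega^s$ and leaves the $\Gamma_\omega^s$ case implicit.
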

\begin{proof}
Note that
\begin{align*}
\|\Theta^s_\omega f-\Theta^r_\omega f\|_{\omega,p}^p&=\int_G\frac{1}{\omega(t)^p}|R_s(f\omega)(t)-R_r(f\omega)(t)|^p\omega(t)^pd\mu(t)\\
&=\int_G|R_s(f\omega)(t)-R_r(f\omega)(t)|^pd\mu(t)\\
&=\|R_s(f\omega)-R_r(f\omega)\|_p^p\to 0.
\end{align*}
This since $f\omega\in L^p(G)$ and by a straightforward application of \cite[proposition 2.42]{Folland}.
\end{proof}
\begin{lema}\label{operadores} 
Let $G$ be a locally compact abelain group, $\omega$ a weight on $G$ and $f,g\in \mathcal{L}^p(G,\omega).$ Then
\begin{enumerate}
    
    \item $\|\Gamma_\omega^sf\|_{\omega,p}=\|f\|_{\omega,p}$\hspace{0.1cm}  y \hspace{0.1cm} $\|\Theta_\omega^sf\|_{\omega,p}=\|f\|_{\omega,p},$
    \item  $\Theta_\omega^s\Gamma_\omega^sf=f=\Gamma_\omega^s\Theta_\omega^sf.$
\end{enumerate}
 \end{lema}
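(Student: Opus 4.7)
The plan is to prove both parts by direct computation, exploiting (i) the left- and right-invariance of $\mu$ (both available because an abelian $G$ is unimodular) and (ii) commutativity of $G$. A more structural restatement, which I would include as a remark, is that the map $\sigma(f)=f\omega$ extends verbatim to a linear isometry from $\mathcal{L}^p(G,\omega)$ onto $L^p(G)$ (this is precisely what the $\omega^p$-weighting in the $\mathcal{L}^p$-norm is engineered for), and the factorizations $\Gamma_\omega^s=\sigma^{-1}L_s\sigma$ and $\Theta_\omega^s=\sigma^{-1}R_s\sigma$ continue to hold; both assertions then reduce immediately to standard facts about translations on $L^p(G)$.

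For part (1), my first step is to substitute the defining formula into $\|\Gamma_\omega^s f\|_{\omega,p}^p$; the factor $\omega(t)^{-p}$ coming from $|\Gamma_\omega^s f(t)|^p$ cancels the $\omega(t)^p$ from the weighted norm, reducing the integral to
\begin{equation*}
\int_G |f(s^{-1}t)|^p\omega(s^{-1}t)^p\,d\mu(t),
\end{equation*}
and the change of variable $u=s^{-1}t$, legitimate by left-invariance of $\mu$, delivers $\|f\|_{\omega,p}^p$. The $\Theta_\omega^s$ half is the mirror image: the same cancellation leaves $\int_G|f(ts)|^p\omega(ts)^p\,d\mu(t)$, and the substitution $u=ts$ is now legitimate by right-invariance of $\mu$ (available because $G$ is abelian, hence unimodular).

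For part (2), I would exploit commutativity directly: since $s^{-1}ts=t$ for all $s,t\in G$, the ordinary translations satisfy $R_sL_s=L_sR_s=\operatorname{id}$, and combining this with the conjugation relations gives
\begin{equation*}
\Theta_\omega^s\Gamma_\omega^s=\sigma^{-1}R_sL_s\sigma=\operatorname{id},\qquad \Gamma_\omega^s\Theta_\omega^s=\sigma^{-1}L_sR_s\sigma=\operatorname{id}.
\end{equation*}
Alternatively, unwinding the definitions by hand yields the same outcome: each composition collapses to $f(s^{-1}ts)\,\omega(s^{-1}ts)/\omega(t)$ after the intermediate $\omega$-factors telescope, and this equals $f(t)$ by commutativity. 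No genuine obstacle arises in this lemma; the only subtle point worth flagging explicitly is that $\sigma$ extends naturally to the whole $\mathcal{L}^p$-scale (and not only to the $\mathscr{L}^1$-setting where it was originally introduced), which is immediate from the definition of the weighted norm.
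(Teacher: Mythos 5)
Your proposal is correct and follows essentially the same route as the paper: direct substitution into the weighted norm, cancellation of the $\omega(t)^{\pm p}$ factors, and translation invariance of $\mu$ (the paper writes out the $\Theta_\omega^s$ case and leaves $\Gamma_\omega^s$ and part (2) as analogous/straightforward, exactly as you do). Your added remark that $\sigma(f)=f\omega$ is an isometry of $\mathcal{L}^p(G,\omega)$ onto $L^p(G)$ conjugating $\Gamma_\omega^s,\Theta_\omega^s$ to $L_s,R_s$ is a valid and slightly more structural packaging of the same computation, in the spirit of Lemma \ref{sigma es iso}.
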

 \begin{proof}
Note that given $f,g\in \mathcal{L}^p(G,\omega)$ and $s\in G$   it follows that
  \begin{enumerate}

\item We have that 
\begin{align*} 
\|\Theta_\omega^sf\|_{\omega,p}&=\left(\int_G |\Theta_\omega^sf(t)|^p\omega(t)^pd\mu(s)\right)^{\frac{1}{p}}\\
&=\left(\int_G \left|\frac{f(ts)\omega(ts)}{\omega(t)}\right|^p\omega(t)^pd\mu(s)\right)^{\frac{1}{p}}\\
&=\left(\int_G |f(ts)|^p\omega(ts)^pd\mu(s)\right)^{\frac{1}{p}}\\
&=\|f\|_{\omega,p}.
\end{align*}
Similarly, it follows that $\|\Gamma_\omega^sf\|_{\omega,p}=\|f\|_{\omega,p}.$
\item 
The proof is a straightforward calculation.\qedhere
  \end{enumerate}
 \end{proof}
\begin{obs}\label{unitarios}
From the Lemma $\ref{operadores}$ it follows that for each $s\in G,$ the operator $\Gamma_\omega^s:\mathcal{L}^2(G,\omega)\to \mathcal{L}^2(G,\omega)$ is a surjective isometry; that is, a unitary operator.
\end{obs}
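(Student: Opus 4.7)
The plan is to assemble the observation directly from the two conclusions of Lemma \ref{operadores} specialized to $p=2$. From part (1) of that lemma, taking $p=2$, we get $\|\Gamma_\omega^s f\|_{\omega,2}=\|f\|_{\omega,2}$ for every $f\in\mathcal{L}^2(G,\omega)$, which says precisely that $\Gamma_\omega^s$ is an isometry of the Hilbert space $\mathcal{L}^2(G,\omega)$ into itself. In particular it is linear (linearity is immediate from the defining formula) and bounded.

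For surjectivity I would invoke part (2) of the lemma, which gives $\Gamma_\omega^s\Theta_\omega^s f=f$ for every $f\in\mathcal{L}^2(G,\omega)$. This exhibits $\Theta_\omega^s f$ as a preimage of $f$ under $\Gamma_\omega^s$, so $\Gamma_\omega^s$ is surjective; the companion identity $\Theta_\omega^s\Gamma_\omega^s f=f$ shows $\Theta_\omega^s$ is its two-sided inverse. Hence $\Gamma_\omega^s:\mathcal{L}^2(G,\omega)\to\mathcal{L}^2(G,\omega)$ is a bijective linear isometry of a Hilbert space, which is the standard characterization of a unitary operator (equivalently, the polarization identity promotes norm-preservation to inner-product preservation, and combined with surjectivity one obtains $(\Gamma_\omega^s)^*=(\Gamma_\omega^s)^{-1}=\Theta_\omega^s$).

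There is no real obstacle here; the observation is a routine packaging step whose only subtlety is that one must have both an isometry statement and a surjectivity statement, and Lemma \ref{operadores} supplies both in its two parts. If desired one could append the explicit identification $(\Gamma_\omega^s)^{*}=\Theta_\omega^s$, which will presumably be useful later and follows from the uniqueness of the two-sided inverse of a unitary.
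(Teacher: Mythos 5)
Your argument is exactly the intended one: the paper's observation is justified precisely by combining part (1) of Lemma \ref{operadores} with $p=2$ (isometry) and part (2) (the two-sided inverse $\Theta_\omega^s$, giving surjectivity), so that $\Gamma_\omega^s$ is a bijective linear isometry of the Hilbert space $\mathcal{L}^2(G,\omega)$ and hence unitary. This matches the paper's reasoning, and your added remark that $(\Gamma_\omega^s)^*=\Theta_\omega^s$ is a correct, harmless refinement.
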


\begin{teo}
Let $G$ be a locally compact abelian group, then the mapping $s\mapsto\Gamma_\omega^s$ form $G$ to $\mathcal{B}(\mathcal{L}^2(G,\omega))$ is a uniary representation from $G.$ 
\end{teo}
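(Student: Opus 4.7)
The plan is to assemble the statement directly from results established earlier in Section~2, since nearly all the ingredients are already available. To show that $s \mapsto \Gamma_\omega^s$ is a unitary representation, I must verify three properties: (i) each $\Gamma_\omega^s$ is a unitary operator on $\mathcal{L}^2(G,\omega)$, (ii) the assignment is a group homomorphism into the unitary group $\mathcal{U}(\mathcal{L}^2(G,\omega))$, and (iii) the map is SOT-continuous, i.e. $s \mapsto \Gamma_\omega^s f$ is norm-continuous from $G$ into $\mathcal{L}^2(G,\omega)$ for every fixed $f$.

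For (i) I would simply cite Observation~\ref{unitarios}, which records that on $\mathcal{L}^2(G,\omega)$ the operator $\Gamma_\omega^s$ is a surjective isometry, hence unitary. For (ii) the homomorphism property $\Gamma_\omega^{sr} = \Gamma_\omega^s \Gamma_\omega^r$ is exactly Lemma~\ref{gamma es morfismo}, and the identity at $e \in G$ is clear from the definition, since $\Gamma_\omega^e f = f$. It is worth remarking here that both of these cited results require $G$ to be abelian and the weight to behave well under the relevant translations, hypotheses which are in force throughout the section.

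For (iii) the continuity follows from Lemma~\ref{Gamma SOT continuo 2 } specialized to $p = 2$: fixing $f \in \mathcal{L}^2(G,\omega)$, the map $s \mapsto \Gamma_\omega^s f$ is continuous into $\mathcal{L}^2(G,\omega)$. Combined with (i), which guarantees each operator is bounded with norm $1$, this yields the SOT-continuity of $s \mapsto \Gamma_\omega^s$ as an operator-valued map.

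Putting these three pieces together, $s \mapsto \Gamma_\omega^s$ is a unitary representation of $G$ on the Hilbert space $\mathcal{L}^2(G,\omega)$. I do not anticipate any genuine obstacle here: the work has been done in the preceding lemmas, and the only mild subtlety is ensuring we are reading them in the $\mathcal{L}^2$-norm rather than the $\|\cdot\|_{\omega,1}$-norm of $\mathscr{L}^1(G,\omega)$; for the continuity this is precisely the content of Lemma~\ref{Gamma SOT continuo 2 }, and for the isometry this is Observation~\ref{unitarios}.
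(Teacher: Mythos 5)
Your proposal is correct and follows exactly the paper's own argument: the paper's proof consists of citing Observation~\ref{unitarios} for unitarity, Lemma~\ref{gamma es morfismo} for the homomorphism property, and Lemma~\ref{Gamma SOT continuo 2 } for SOT-continuity. You have simply spelled out the same assembly in more detail.
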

\begin{proof}
It follows from the Observation $\ref{unitarios},$  Lemma $\ref{Gamma SOT continuo 2 }$ and  Lemma $\ref{gamma es morfismo}.$
\end{proof}

\begin{Backmatter}

\bibliographystyle{plain}
\bibliography{Bibliografia.bib}

\printaddress

\end{Backmatter}
\end{document}